\theoremstyle{definition}
\newtheorem{thm}{Theorem}[section]
\newtheorem{pro}[thm]{Proposition}
\newtheorem{cor}[thm]{Corollary}
\newtheorem{lem}[thm]{Lemma}
\newtheorem{notation}[thm]{Notation}
\theoremstyle{definition}
\newtheorem{rem}[thm]{Remark}
\newtheorem{exam}[thm]{Example}
\numberwithin{equation}{section}
\newcommand \cochord{\operatorname{co-chord}}
\newcommand \reg{\operatorname{reg}}
\newcommand \ma{\operatorname{m}}
\newcommand \K{\mathbb{K}}
\newcommand \kk{\mathcal{K}}
\newcommand \kp{\kappa}
\begin{document}
\title[Upper bounds for the regularity of symbolic powers of
edge ideals]{Upper bounds for the regularity 
of symbolic powers of certain classes of edge ideals}

\author[Arvind Kumar]{Arvind Kumar$^{1, 3}$}
	\email{arvindkumar@cmi.ac.in}
	\thanks{$^1$ The author is partially supported by Sciences and Engineering Research
Board, India under the National Postdoctoral Fellowship (PDF/2020/001436).}
	\author[S. Selvaraja]{S. Selvaraja$^{2, 3}$}
	\email{selva.y2s@gmail.com, sselvaraja@cmi.ac.in}
	\thanks{$^2$ The author is partially supported by DST, Govt of India under
the DST-INSPIRE Faculty Scheme (DST/Inspire/04/2019/001353).}
\thanks{$^3$ Both the authors are partially supported by the Infosys Foundation}
	\address{Chennai Mathematical Institute, H1, SIPCOT IT Park, Siruseri, Kelambakkam, Chennai 603103, Tamil Nadu, India}

\begin{abstract}
Let $G$ be a finite simple graph and $I(G)$ denote the corresponding edge ideal in a polynomial ring over a field $\K$. In this paper, we obtain upper bounds for the Castelnuovo-Mumford regularity of symbolic powers of certain classes of edge ideals. We also prove that for several classes of graphs, the regularity of symbolic powers of their edge ideals coincides with that of their ordinary powers.
\end{abstract}

\subjclass[AMS Classification 2010.]{Primary: 13D02, 05E40}

\keywords{Castelnuovo-Mumford regularity, powers of edge ideals, symbolic powers of edge ideals}

 \maketitle

 \section{Introduction}
 
 Let $I$ be a homogeneous ideal of a polynomial ring 
$S=\K[x_1,\ldots,x_n]$.
Then for $r \geq 1$, the $r$-th symbolic power of $I$ is defined as 
$I^{(r)}= \bigcap\limits_{\mathfrak{p}\in\min (I)} I^rS_{\mathfrak{p}} \cap S$,
where $\min(I)$ is the set of minimal prime ideals of $I$.
A classical result of Zariski-Nagata says that 
the $r$-th symbolic power of an 
ideal consists of the elements that vanish up to order $r$ on the corresponding variety.
Besides being an interesting
subject in its own right, symbolic powers appears as auxiliary tools in several important results in commutative algebra. 
In general, finding the generators of symbolic powers of $I$ is a challenging task.
Many authors studied symbolic powers (see \cite{DDAGHN} for a survey in this direction).

 Ever since Bertram, Ein, and Lazarsfeld proved that if $I$ is the defining ideal of a smooth complex projective variety, then $\reg(I^r)$ is bounded by a linear function of $r$, where $\reg(-)$ denotes the Castelnuovo-Mumford regularity, the study of the regularity of powers of homogeneous ideals of a polynomial ring has been a central problem in commutative algebra and algebraic geometry. One important result in this direction was given by Cutkosky, Herzog, and Trung \cite{CHT}, and independently by Kodiyalam \cite{vijay}. They proved that if $I$ is a homogeneous ideal of $S$, then the regularity
of $I^r$ is asymptotically a linear function in $r$
 i.e., there exist non-negative integers $a$ and $b$ depending on $I$ such that
$\reg(I^r)=ar+b \text{ for all $r \gg 0$}.$

Catalisano, Trung and Valla \cite[Proposition 7]{CTV93}, proved that, if 
$I$ defines $2q+1$ points on a rational normal curve in $\mathbb{P}^q$, $q \geq 2$, then for all 
$r \geq 1$, $\reg(I^{(r)})=2r+1+\lfloor \frac{r-2}{q} \rfloor$. Hence 
the function $\reg(I^{(r)})$ is not eventually linear in general. Minh and Trung asked, in \cite{MT19}, that if $I$ is a squarefree 
monomial ideal, then is $\reg(I^{(r)})$
eventually linear? Recently, Le Xuan et al. \cite{DHNT19}, gave a 
counterexample to the above question. 
Let $G$ be a simple (no loops, no multiple edges) undirected graph on the vertex set $\{x_1,\ldots,x_n\}$ and $I(G)$ denote the ideal generated by $\{x_i x_j \mid \{x_i , x_j \} \in E (G) \}$.
Minh \cite[p.1]{GHOJ18} conjectured that for any graph $G$,
 $\reg(S/I(G)^r)=\reg(S/I(G)^{(r)})$ for all $r \geq 1$.
It is known that $\reg(I(G)^r) = 2r + b$ for some $b$ and $r \geq r_0$.
While the aim is to obtain the linear polynomial corresponding to $\reg(I(G)^r)$, it 
seems unlikely that a single combinatorial invariant will represent the constant term 
for all graphs.
Establishing a relationship between the regularity of powers of edge ideals and 
combinatorial invariants associated with graphs such as the induced matching number
and the co-chordal cover number has been an active topic of research in the past decade 
( cf. \cite{selvi_ha}, \cite{jayanthan}, \cite{JS18}).
It was proved in \cite{selvi_ha} and \cite{JS18} that for any graph $G$,
 \begin{equation}\label{up-lp-reg}
2r+\nu(G) - 2 \leq \reg(S/I(G)^{r}) \leq 2r+\cochord(G) - 2, 
\text{ for all } r \geq 1,\end{equation}
where $\nu(G)$ denotes the induced matching number of
$G$ and $\cochord(G)$ denotes the co-chordal cover number of $G$.

In \cite{SVV} Simis, Vasconcelos and Villarreal proved that
$G$ is a bipartite graph if and only if $I(G)^r = I(G)^{(r)}$ for all $r \geq 1$. 
Therefore, Minh's conjecture is trivially true in this case. If $G$ is not bipartite, then it contains an odd cycle. Therefore, the
first case of study to verify Minh's conjecture is the class of odd cycle graphs, and this has been already done by Gu et al., in \cite{GHOJ18}. 
They also proved that for any graph $G$, 
$2r+\nu(G)-2\leq \reg(S/I(G)^{(r)}), \text{ for all }r\geq 1.$
Jayanthan and Kumar \cite{JK19} proved that if $G$ is a clique sum of an odd cycle with certain bipartite graphs, 
then $\reg(S/I(G)^{(r)})=\reg(S/I(G)^r)$, for all $r\geq 1$. Recently, Fakhari \cite{Seyed-chordal} proved that if $G$ is a chordal graph, then $\reg(S/I(G)^{(r)})=\reg(S/I(G)^r)=2r+\nu(G)-2$ 
for all $r \geq 1$. He also proved that if $G$ is a unicyclic graph, then $\reg(S/I(G)^{(r)})=
\reg(S/I(G)^r)=2r+\reg(S/I(G))-2$, for all $r\geq 1$, \cite{fak-uni}. 
In \cite{KKS19}, Kumar, Kumar and Sarkar proved that 
if $G$ is either a complete multipartite graph or a wheel graph, then for all $r \geq 2$,
$\reg(S/I(G)^{(r)})=\reg(S/I(G)^r)=2r+\nu(G)-2$.
 
There is no general upper bound known for $\reg(S/I(G)^{(r)})$. 
Considering the conjectures of Alilooee, Banerjee, 
Beyarslan, H\'a \cite[Conjecture 7.11(2)]{BBH17} 
and Minh, one may ask the following questions:

\begin{enumerate}
 \item[Q1] Is $\reg(S/I(G)^{(r)}) \leq 2r+\reg(S/I(G))-2$ for all $r \geq 1$? 
 \item[Q2] Is $\reg(S/I(G)^{(r)}) $ a linear function for $r\gg0$? If yes, can one obtain the linear polynomial corresponding to
  $\reg(S/I(G)^{(r)})$?
\end{enumerate}
We shall address the above problems.

There has been a lot of work on algebraic and combinatorial properties of edge ideals/cover ideals of graphs attaching cliques. For example,
Villarreal proved that if $G$ is any graph, then $W(G)$ is a Cohen-Macaulay graph, where $W(G)$ is the graph obtained by adding a whisker to each vertex of $G$, \cite{vill_cohen}.
In \cite{DochEng09}, \cite{Wood2}, the authors 
showed that $W(G)$ is a vertex-decomposable graph. In \cite{BFH15}, Biermann et al., gave sufficient conditions on $S \subset V(G)$ such
that $G \cup W(S)$ is vertex-decomposable, where $G\cup W(S)$ is the graph obtained from $G$ by adding a whisker at each vertex in $S$ (see also \cite{FH}).
 Later, Hibi et al., \cite{Hibi_cameronwalker} gave a generalization of Villarreal's result by showing that the graph obtained by attaching a clique to each vertex of a graph $G$ is unmixed and vertex-decomposable.
In a different direction, several authors have studied similar phenomena (cf. \cite{Haj_yessemi, AFY15}). 
In this paper, we consider the graph obtained by attaching a connected graph to some of the vertices of a graph. 
Let $H$ be a graph and $T=\{x_{i_1},\ldots,x_{i_q}\} \subseteq V(H)$. 
 The graph 
 $H_T=H(\kk(x_{i_1}), \ldots, \kk(x_{i_q}))$ is 
 obtained from $H$ by attaching $\kk(x_{i_j})$ to $H$ at $x_{i_j}$ for all
 $1 \leq j \leq q$, where $\kk(x)$ is a graph obtained by attaching some 
 complete graphs at a common vertex $x$ (see Section \ref{upperbound} 
 for definition). 
 We prove:
 \vskip 2mm \noindent
\textbf{Theorem \ref{bipartite}.}
{\em Let $H$ be a bipartite graph and $G=H_T$ for some $T \subseteq V(H)$.
Then for all $r \geq 1$,}
\[
 \reg(S/I(G)^{(r)}) \leq 2r+\reg(S/I(G))-2.
 \]

 We then move on to compute precise expressions for the regularity of symbolic powers of edge
ideals. We observe that for certain classes of graphs, the induced matching number coincides with the co-chordal cover number; for example, 
Cameron-Walker graphs, a subclass of weakly chordal graphs and certain whiskered graphs (Proposition \ref{whis-bipartite}).
We then use the Corollary \ref{bipartite-cor} and \eqref{up-lp-reg}
for such classes of graphs to
get $\reg(S/I(G)^{(r)})=\reg(S/I(G)^r)=2r+\nu(G)-2$ for all $r \geq 1$ (Corollary \ref{cam-wal}).

The second main result of the paper answers the question Q1 for a more general class than that of unicyclic graphs. Specifically, we show that the
 upper bound given in Q1 is attained by this class of graphs; that is,
 \vskip 1mm \noindent
\textbf{Theorem \ref{main-unicyclic}.}
Let $H$ be a unicyclic graph and  $G=H_T$ for some $T \subseteq V(H)$. Then 
 for all $r \geq 1$,
 \[
  \reg(S/I(G)^{(r)})=\reg(S/I(G)^r)=2r+\reg(S/I(G))-2.
 \]
  
 Our paper is organized as follows. In Section \ref{pre}, we collect the necessary notation, terminology, and some results that are used in the rest of the paper. 
In Section \ref{upperbound}, we prove the upper bound for the regularity of symbolic powers of $I(G)$ when $G=H_T$ and $H$ is a bipartite graph.
Finally, we compute the precise expressions for the regularity of symbolic
powers of certain classes of edge ideals in Section \ref{equal}.

 \section{Preliminaries}\label{pre}
 Throughout this paper, $G$ denotes a finite simple graph. 
For a graph $G$, $V(G)$ and $E(G)$ represent the set of all
vertices and the set of all edges of $G$ respectively.
The \textit{degree} of a vertex $x \in V(G),$ denoted by 
$\deg_G(x),$ is the number of edges incident to $x.$
A subgraph $H \subseteq G$ is called \textit{induced} if for $u, v
\in V(H)$, $\{u,v\} \in
E(H)$ if and only if $\{u,v\} \in E(G)$.
For any vertex $u \in V(G)$, let $N_G(u) = \{v \in V (G)\mid \{u, v\} \in E(G)~\}$
and $N_G[u]= N_G(u) \cup \{u\}$. 
For $U \subseteq V(G)$, denote by $G \setminus U$ 
the induced subgraph of $G$ on the vertex set $V(G) \setminus U$.
For a subset $A \subseteq V(G)$, $G[A]$ denotes the induced subgraph of $G$
on the vertex set $A$.
 A subset
$X$ of $V(G)$ is called \textit{independent} if there is no edge $\{x,y\} \in E(G)$ 
for $x, y \in X$. A graph $G$ is called \textit{bipartite} if there are
two disjoint independent subsets $X, Y$ of $V(G)$ such that $X \cup Y
= V(G)$. 
Let $C_k$ denote the cycle on $k$ vertices.

 A \textit{matching} in a graph $G$ is a subgraph consisting of pairwise disjoint edges. 
 The \textit{matching number} of $G$, denoted by $\ma(G)$, is the maximum cardinality of a matching of $G$. If the subgraph is an induced subgraph, then the matching is an \textit{induced matching}. The largest size of an induced matching in $G$ is called its \textit{induced matching number} and denoted by $\nu(G)$.  
The \textit{complement} of a graph $G$, denoted by $G^c$, is the graph on the same
vertex set as $G$ in which $\{u,v\}$ is an edge of $G^c$ if and only if it is not an edge of $G$.
A graph $G$ is \textit{chordal} if every induced
cycle in $G$ has length $3$, and is co-chordal if $G^c$ is chordal.
The \textit{co-chordal cover number}, denoted by $\cochord(G)$, is the
minimum number $n$ such that there exist co-chordal subgraphs
$H_1,\ldots, H_n$ of $G$ with $E(G) = \bigcup_{i=1}^n E(H_i)$.
Observe that for any graph $G$, we have
\begin{equation}\label{inva-ineq}
 \nu(G) \leq \cochord(G) \leq \ma(G).
\end{equation} 
A graph is said to be a \textit{unicyclic} graph if it contains exactly one cycle as a subgraph. A {\it complete graph} is a graph in which each pair of vertices are adjacent.  A subset $U$ of $V(G)$ is said to be a \textit{clique} if the induced subgraph with vertex set $U$ is a complete graph. 
A \textit{simplicial vertex} of a graph $G$ is a vertex $x$ such that the neighbors of $x$ form a clique in
$G$.  
Note that if $\deg_G(x)\leq 1$, 
then $x$ is a simplicial vertex of $G$.  

Let $\kk(x)$ denote a graph obtained by attaching some complete graphs at a common vertex $x$. The graph $\kk(x)$ is said to be a
\textit{star graph} if every simplicial vertex has degree $1$ and it is said to be \textit{star complete} if there is a simplicial vertex of degree $ \geq 2$.

 Let $M$ be a graded module over standard graded polynomial ring 
$S = \K[x_1,\ldots,x_n]$. Let
the graded minimal free resolution of $M$ be 
\[
 0 \longrightarrow \bigoplus_{j \in \mathbb{Z}} S(-j)^{\beta_{p,j}(M)} 
\overset{\phi_{p}}{\longrightarrow} \cdots \overset{\phi_1}{\longrightarrow} 
\bigoplus_{j \in \mathbb{Z}} S(-j)^{\beta_{0,j}(M)} 
\overset{\phi_0}{\longrightarrow} M\longrightarrow 0,
 \]
where $p \leq n$ and
$\beta_{i,j}(M)\neq 0$ denote the 
$(i,j)$-th graded Betti number of $M$. 
The 
Castelnuovo-Mumford regularity of $M$, denoted by $\reg(M)$, is defined as 
$\reg(M)=\max \{j-i \mid \beta_{i,j}(M) \neq 0\}.$ 
Let $I$ be a non-zero proper homogeneous ideal of $S$. 
Then it is immediate from the definition that $\displaystyle\reg\left(S/I\right)=\reg(I)-1$.
If $I=S$, then we set $\reg(S/I)=-\infty$.

We use the following well-known results to prove an upper bound for the regularity
of symbolic powers 
of edge ideals inductively.
 \begin{lem}\label{Inequalities} \cite[Lemma 3.1]{Ha2}
If $H$ is an induced subgraph of $G,$ then $\reg(S/I(H)) \leq \reg(S/I(G)).$
\end{lem}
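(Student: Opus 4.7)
The plan is to invoke Hochster's formula for the graded Betti numbers of squarefree monomial ideals. Recalling that $I(G)$ is the Stanley--Reisner ideal of the independence complex $\Delta(G)$ (whose faces are the independent sets of $G$), Hochster's formula reads
\[
\beta_{i,i+j}(S/I(G)) \;=\; \sum_{\substack{W \subseteq V(G) \\ |W| = i+j}} \dim_{\K} \widetilde{H}_{j-1}\bigl(\Delta(G)[W];\K\bigr),
\]
where $\Delta(G)[W]$ denotes the induced subcomplex on $W$. Consequently $\reg(S/I(G)) = \max\{\,j-i : \beta_{i,i+j}(S/I(G)) \neq 0\,\}$.

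First I would observe that the induced-subgraph hypothesis is precisely what is needed to identify independence complexes: a subset of $V(H)$ is independent in $H$ if and only if it is independent in $G$, so $\Delta(H) = \Delta(G)[V(H)]$, and more generally $\Delta(H)[W] = \Delta(G)[W]$ for every $W \subseteq V(H)$. Writing $R = \K[V(H)]$, Hochster's formula applied to $R/I(H)$ therefore takes the form
\[
\beta_{i,i+j}(R/I(H)) \;=\; \sum_{\substack{W \subseteq V(H) \\ |W| = i+j}} \dim_{\K} \widetilde{H}_{j-1}\bigl(\Delta(G)[W];\K\bigr),
\]
which is a termwise-nonnegative subsum of the corresponding expression for $\beta_{i,i+j}(S/I(G))$. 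Hence any pair $(i,j)$ with $\beta_{i,i+j}(R/I(H)) \neq 0$ also satisfies $\beta_{i,i+j}(S/I(G)) \neq 0$, and taking the maximum of $j-i$ over such pairs yields $\reg(R/I(H)) \leq \reg(S/I(G))$.

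The one subtlety to pin down is the comparison between the two ambient polynomial rings, since the lemma compares $\reg(S/I(H))$ with $\reg(S/I(G))$ both computed over $S$, whereas Hochster's formula is most naturally applied to $I(H)$ over $R$. This is harmless: the variables indexed by $V(G) \setminus V(H)$ are free over $R/I(H)$, so base change preserves the minimal free resolution and all graded Betti numbers. With that bookkeeping out of the way, no real obstacle remains, as the combinatorial comparison of induced subcomplexes is built into the inducedness of $H$. I expect this to be the cleanest route; an alternative approach via successive short exact sequences $0 \to (S/(I(G) : x_v))(-1) \xrightarrow{x_v} S/I(G) \to S/(I(G),x_v) \to 0$ for $v \in V(G) \setminus V(H)$ would require separately controlling $\reg(S/(I(G):x_v))$ and seems less direct.
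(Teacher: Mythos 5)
Your argument is correct: the paper does not prove this lemma but cites it from H\`a's survey, and the standard proof there is exactly the restriction argument you give, namely that Hochster's formula expresses $\beta_{i,i+j}(S/I(H))$ as a subsum (over $W \subseteq V(H)$) of the nonnegative terms computing $\beta_{i,i+j}(S/I(G))$, using that inducedness gives $\Delta(H)[W]=\Delta(G)[W]$. Your remark that the extra variables of $S$ over $\K[V(H)]$ do not change the graded Betti numbers correctly disposes of the only bookkeeping issue, so the proposal is complete.
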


\begin{rem}\label{main-rmk}
	Let $I \subset S$ be a nonzero homogeneous ideal and $f \in S$ be a homogeneous polynomial of 
	degree $d>0$. If $(I:f)=S$, then $\reg(S/I:f)+d =-\infty$ and hence,  
	$\reg(S/I)=\reg(S/(I,f)) \leq \max \{\reg(S/(I:f))+d,\reg(S/(I,f))\}$. 
	If $(I:f)$ is a proper ideal, then by \cite[Lemma 1.2]{HTT}, $\reg(S/I) \leq \max \{\reg(S/(I:f))+d,\reg(S/(I,f))\}$. Hence, in 
	both cases, we have $\reg(S/I) \leq \max \{\reg(S/(I:f))+d,\reg(S/(I,f))\}$.
\end{rem}
  
\section{Upper bound}\label{upperbound}
In this section, we obtain an upper bound for the regularity of symbolic powers of edge ideals
of certain graphs.
We first prove a technical lemma which is used to prove our main results. 
For $U \subseteq V(G)$, set $x_U=\prod_{i \in U}x_i$.
\begin{lem}\label{tech-lemma}
	Let $G$ be a graph. 
	If $x_1 $ is a simplicial vertex of $G$ and $x_1 \in W \subseteq N_G[x_1]$, then for all $r \geq 2$, $\reg(S/I(G)^{(r)}) \leq$
	\begin{align*}
		 \max 
		\{\reg(S/I(G\setminus x_1)^{(r)}), 
		\reg(S /(I(G\setminus A)^{(r)}:x_B))+|B| \mid x_1 \in B, 
		A \cup B=W, A \cap B= \emptyset\}.\end{align*}
\end{lem}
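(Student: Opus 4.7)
The strategy is an iterated application of the splitting in Remark \ref{main-rmk} to the vertices of $W$, taking $x_1$ first. The hypothesis that $x_1$ is simplicial together with $W\subseteq N_G[x_1]$ forces $W$ to be a clique of $G$, which is what will make colon and quotient operations interact cleanly with the symbolic power.

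The first step applies Remark \ref{main-rmk} with $f=x_1$, yielding
\[
\reg(S/I(G)^{(r)})\;\le\;\max\bigl\{\reg(S/(I(G)^{(r)}:x_1))+1,\ \reg(S/(I(G)^{(r)},x_1))\bigr\}.
\]
For the ``add $x_1$'' summand I would use the standard identity $(I(G)^{(r)},x_1)=(I(G\setminus x_1)^{(r)},x_1)$, which follows from the minimal-prime description $I(G)^{(r)}=\bigcap_{P\in\min(I(G))} P^{r}$ and the fact that setting a vertex variable to zero reduces the edge ideal of $G$ to that of $G\setminus x_1$. Since $x_1$ does not appear in $I(G\setminus x_1)^{(r)}$, it is a nonzerodivisor modulo this ideal, so $\reg(S/(I(G)^{(r)},x_1))=\reg(S/I(G\setminus x_1)^{(r)})$, matching the first term of the claimed maximum.

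For the colon side I would iterate Remark \ref{main-rmk}, successively splitting by the remaining variables in $W\setminus\{x_1\}$. At each stage the current ideal is replaced by its colon (picking up a $+1$ on the degree shift) or by the sum with a new variable. After exhausting $W$, the leaves of the resulting binary tree are indexed by partitions $W=A\sqcup B$ with $x_1\in B$; the leaf associated with $(A,B)$ has accumulated shift $|B|$ and corresponds to the ideal $((I(G)^{(r)}:x_B),x_A)$. The final identification
\[
\bigl((I(G)^{(r)}:x_B),\,x_A\bigr)\;=\;\bigl((I(G\setminus A)^{(r)}:x_B),\,x_A\bigr)
\]
then allows me to replace the leaf ideal by $(I(G\setminus A)^{(r)}:x_B)$, since the variables in $A$ do not occur in the latter ideal and thus form a regular sequence modulo it, giving $\reg(S/((I(G\setminus A)^{(r)}:x_B),x_A))=\reg(S/(I(G\setminus A)^{(r)}:x_B))$. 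Assembling all leaf contributions through the iterated inequality of Remark \ref{main-rmk} produces exactly the maximum in the statement.

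The main obstacle is precisely the leaf identification in the last paragraph: one must commute colon by $x_B$ with reduction modulo $x_A$ inside the symbolic power. Concretely, one has to verify that every monomial generator of $(I(G)^{(r)}:x_B)$ not divisible by any variable in $A$ already lies in $(I(G\setminus A)^{(r)}:x_B)$, and conversely. The cleanest route is via the explicit description of $I(G)^{(r)}$ in terms of minimal vertex covers, checking membership prime-by-prime. The hypothesis that $A\cup B=W$ is a clique containing the simplicial vertex $x_1$ is where the structural assumption is used: without it, minimal covers of $G\setminus A$ need not extend to minimal covers of $G$ in a way compatible with the multiplicities recorded by $x_B$, and the identity can fail.
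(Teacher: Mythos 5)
Your proposal is correct and takes essentially the same route as the paper: both iterate Remark \ref{main-rmk} over the vertices of $W$ (the paper organizes this as an induction on $|W|$, performing the identification $((I(G\setminus A)^{(r)}:x_B),x_l)=((I(G\setminus(A\cup\{x_l\}))^{(r)}:x_B),x_l)$ at each step rather than once at the leaves as you do). One small correction: the leaf identity $((I(G)^{(r)}:x_B),x_A)=((I(G\setminus A)^{(r)}:x_B),x_A)$ is purely formal, following from $(I(G)^{(r)},x_A)=(I(G\setminus A)^{(r)},x_A)$ together with distributivity of colon by a monomial over sums of monomial ideals, so neither the simpliciality of $x_1$ nor $W$ being a clique is actually needed in this lemma; those hypotheses only enter later, when \cite[Lemma 2]{Seyed_us} is invoked to rewrite $(I(G\setminus A)^{(r)}:x_B)$ as a lower symbolic power.
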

\begin{proof}
	Observe that $(I(G)^{(r)},x_1)=(I(G\setminus x_1)^{(r)},x_1)$. It follows from 
	Remark \ref{main-rmk} that 
	$$\reg(S/I(G)^{(r)}) \leq \max \{\reg(S/I(G\setminus x_1)^{(r)}), \reg(S/(I(G)^{(r)}:x_1))+1\}.$$
	Now, to prove the assertion it is enough to prove that for any 
	$W \subseteq N_G[x_1]$ with $x_1 \in W$, 
	\[	\reg(S/(I(G)^{(r)}:x_1)) \leq  \max \{ \reg(S/(I(G\setminus A)^{(r)}:x_B))
	+|B|-1 \mid x_1 \in B, A \cup B=W, A \cap B= \emptyset\}.\]
	
	We prove this by induction on $|W|$. If $|W|=1$, then $W=\{x_1\}$. Therefore we are done. 
	Now, assume that $|W|>1$ and the result is true for $W' \subset N_G[x_1]$ with $x_1 \in W'$ 
	and $|W'|<|W|$. Without loss of generality, we may assume that $W = \{x_1,\ldots,x_l\}$. 
	Set $W'=\{x_1,\ldots,x_{l-1}\}$. By induction hypothesis, we have
	\begin{align}\label{tech1}
	& \reg(S/(I(G)^{(r)}:x_1)) \leq \nonumber \\ & \max \{ \reg(S/(I(G\setminus A)^{(r)}:x_B))+|B|-1 \mid x_1 \in B, A \cup B=W', A \cap B= \emptyset\}.
	\end{align}
Note that for every pair of $A,B$ such that $A\cup B= W'$, $A\cap B =\emptyset$ and $x_1 \in B$,
	$$((I(G\setminus A)^{(r)}:x_B),x_l)=((I(G\setminus (A\cup \{x_l\}))^{(r)}:x_B),x_l)\text{ and }$$ $$ 
	(I(G\setminus A)^{(r)}:x_Bx_l)=(I(G\setminus A)^{(r)}:x_{B\cup \{x_l\}}).$$ 
	Thus, by Remark \ref{main-rmk}, we get
	\begin{align*}\label{tech2}
		&\reg(S/(I(G\setminus A)^{(r)}:x_B))+|B|-1 \\&\leq \max\{
		\reg(S/(I(G\setminus A)^{(r)}:x_{B \cup \{x_l\}}))+1, \reg(S/(I(G\setminus (A\cup \{x_l\}))^{(r)}:x_B))\} +|B|-1 \\& =
		\max\{\reg(S/(I(G\setminus A)^{(r)}:x_{B \cup \{x_l\}}))+|B|, \reg(S/(I(G\setminus (A\cup \{x_l\}))^{(r)}:x_B))+|B|-1\} \\&=
		\max\{\reg(S/(I(G\setminus A)^{(r)}:x_{B \cup \{x_l\}}))+|B\cup x_l|-1, \reg(S/(I(G\setminus (A\cup \{x_l\}))^{(r)}:x_B))+|B|-1\}.
	\end{align*}
	Now, the assertion follows from equation \eqref{tech1} and above inequality.	
\end{proof}

For two disjoint graphs $G_1$ and $G_2$, we denote the union of $G_1$
and $G_2$ by $G_1 \coprod G_2$.

\begin{pro}\label{reg-sum}
 Let $G=G_1 \coprod G_2$ be a graph. For $r \geq 1$, if $\reg(S/I(G_i)^{(s)}) \leq 2s+\rho_i(G_i)-2$ for all $1 \leq s \leq r$, $1 \leq i \leq 2$, 
 then \[\reg(S/I(G)^{(s)}) \leq 2s+\rho_1(G_1)+\rho_2(G_2)-2 \text{ for all } 1 \leq s \leq r .\]
\end{pro}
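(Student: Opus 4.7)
The plan is to exploit the tensor-product structure of the polynomial ring under disjoint union. I would write $S_1 = \K[V(G_1)]$ and $S_2 = \K[V(G_2)]$, so that $S = S_1 \otimes_\K S_2$ and $I(G) = I(G_1)S + I(G_2)S$. The minimal primes of $I(G)$ are exactly the sums $\mathfrak{p}_1 + \mathfrak{p}_2$ with $\mathfrak{p}_i \in \min(I(G_i))$, and a direct valuative check on monomials gives the symbolic power decomposition
\[
I(G)^{(s)} \;=\; \sum_{\substack{a+b=s \\ a,b \geq 0}} I(G_1)^{(a)} \cdot I(G_2)^{(b)}
\]
with the convention $I(G_i)^{(0)} = S_i$.

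I would then argue by induction on $s$. The base case $s = 1$ follows from the Künneth identity $\reg(S/(I(G_1)+I(G_2))) = \reg(S_1/I(G_1)) + \reg(S_2/I(G_2))$, applicable since $I(G_1)$ and $I(G_2)$ live on disjoint variable sets; this gives the required bound $\rho_1(G_1) + \rho_2(G_2) = 2 + \rho_1 + \rho_2 - 2$. For $s \geq 2$, I would iteratively apply Mayer--Vietoris to the above decomposition. For instance, writing $I(G)^{(s)} = U + V$ with $U = I(G_1) \cdot I(G)^{(s-1)} = \sum_{a \geq 1,\, a+b=s} I(G_1)^{(a)} I(G_2)^{(b)}$ and $V = I(G_2)^{(s)}$, the short exact sequence
\[
0 \to S/(U \cap V) \to S/U \oplus S/V \to S/(U+V) \to 0
\]
yields $\reg(S/I(G)^{(s)}) \leq \max\{\reg(S/U),\, \reg(S/V),\, \reg(S/(U\cap V)) - 1\}$. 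Peeling off one summand at a time reduces the problem to bounding the regularities of the pure tensor-type quotients $S/(I(G_1)^{(a)} + I(G_2)^{(b)})$ for $a + b = s$.

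Each such quotient is handled by a second application of Künneth:
\[
\reg(S/(I(G_1)^{(a)} + I(G_2)^{(b)})) \;=\; \reg(S_1/I(G_1)^{(a)}) + \reg(S_2/I(G_2)^{(b)}).
\]
For $a, b \geq 1$ with $a+b=s$, the hypothesis bounds this by $(2a + \rho_1 - 2) + (2b + \rho_2 - 2) = 2s + \rho_1 + \rho_2 - 4$, which is dominated by $2s + \rho_1 + \rho_2 - 2$. In the boundary cases $a = s,\, b = 0$ (resp.\ $a = 0,\, b = s$) the quotient is $S/I(G_1)^{(s)}$ (resp.\ $S/I(G_2)^{(s)}$), and the hypothesis gives regularity at most $2s + \rho_i - 2$, still $\leq 2s + \rho_1 + \rho_2 - 2$ since the $\rho_j$ (as non-negative graph invariants such as $\reg(S/I(G_j))$, $\cochord(G_j)$, etc.) satisfy $\rho_j \geq 0$. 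Taking the maximum completes the induction.

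The hard part will be the bookkeeping in the Mayer--Vietoris iteration: at each peeling step one must verify that the new intersection again lies in the class of ideals of the form $\sum_{a+b=k} I(G_1)^{(a)} I(G_2)^{(b)}$ for some $k \leq s$ (so that the induction on $s$ can close), and that the degree shift of $-1$ from Mayer--Vietoris is absorbed correctly through the iteration. As a fallback, the essentially equivalent formula of Hà--Nguyen--Trung--Trung for the regularity of symbolic powers of sums of ideals in disjoint variable sets delivers the bound directly once the hypothesis is substituted.
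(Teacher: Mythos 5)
Your fallback is, in fact, exactly the paper's proof: the paper invokes the formula of H\`a--Nguyen--Trung--Trung \cite[Theorem 4.6(2)]{Ha_sumsym} for the regularity of symbolic powers of a sum of ideals in disjoint variable sets, which states that $\reg(S/I(G)^{(s)})$ equals the maximum over $n \in [1,s-1]$ and $m \in [1,s]$ of the quantities $\reg(S/I(G_1)^{(s-n)})+\reg(S/I(G_2)^{(n)})+1$ and $\reg(S/I(G_1)^{(s-m+1)})+\reg(S/I(G_2)^{(m)})$, and then substitutes the hypothesis into each term; both families of terms come out $\leq 2s+\rho_1(G_1)+\rho_2(G_2)-2$. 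So the last sentence of your proposal already delivers the proposition.

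Your primary, hand-rolled route is not correct as written, although its defects happen not to break the final bound. First, the peeling does not reduce to quotients by \emph{sums} $I(G_1)^{(a)}+I(G_2)^{(b)}$: the summands of $I(G)^{(s)}$ are the \emph{products} $I(G_1)^{(a)}I(G_2)^{(b)}$, and for $a,b\geq 1$ the correct K\"unneth statement is $\reg\bigl(S/(I(G_1)^{(a)}I(G_2)^{(b)})\bigr)=\reg(S_1/I(G_1)^{(a)})+\reg(S_2/I(G_2)^{(b)})+1$, since $I(G_1)^{(a)}I(G_2)^{(b)}\cong I(G_1)^{(a)}\otimes_\K I(G_2)^{(b)}$ as a module and passing from the ideal to the quotient costs an extra $1$. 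This $+1$ is precisely the first family of terms in the quoted formula and is missing from your arithmetic; it still fits under $2s+\rho_1+\rho_2-2$, but only because the target has slack. Second, and more seriously, the term $\reg(S/(U\cap V))-1$ in your Mayer--Vietoris step is not bounded by anything you prove: one must identify the intersections explicitly (in \cite{Ha_sumsym} this is the statement $\bigl(\sum_{a\leq i-1} I(G_1)^{(s-a)}I(G_2)^{(a)}\bigr)\cap I(G_1)^{(s-i)}I(G_2)^{(i)} = I(G_1)^{(s-i+1)}I(G_2)^{(i)}$), and this identification is the real content of the induction rather than bookkeeping. If you want the elementary route you must prove or cite that identity; otherwise, lead with the citation, which is all the paper does.
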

\begin{proof}
It follows from \cite[Theorem 4.6(2)]{Ha_sumsym} that  
\begin{align*}
	& \reg(S/I(G)^{(s)})=\\& \max_{\begin{subarray}{l} n \in [1,s-1]\\ m \in [1,s] \end{subarray}} \Big\{ \reg(S/I(G_1)^{(s-n)})+\reg(S/I(G_2)^{(n)})+1, \reg(S/I(G_1)^{(s-m+1)})+\reg(S/I(G_2)^{(m)}) \Big\}. 
\end{align*}
Fix $1 \leq n\leq s-1 \leq r-1$ and $1 \leq m\leq s \leq r$. Then
\begin{align*} \reg(S/I(G_1)^{(s-n)}) +\reg(S/I(G_2)^{(n)})+1 & \leq  
 2(s-n)+\rho_1(G_1)-2 +2n +\rho_2(G_2)-2+1 \\ & \leq 
	2s+\rho_1(G_1) +\rho_2(G_2)-2
	\end{align*}
and 
\begin{align*} \reg(S/I(G_1)^{(s-m+1)})+\reg(S/I(G_2)^{(m)}) &\leq 2(s-m+1)+\rho_1(G_1)-2 +2m +\rho_2(G_2)-2\\
 &= 2s+\rho_1(G_1)+\rho_2(G_2) -2 .\end{align*}
Hence, for every $1 \leq s \leq r$, we have $\reg(S/I(G)^{(s)}) \leq 2s +\rho_1(G_1)+\rho_2(G_2)-2 .$
\end{proof}
We fix the notation, which we will use for the rest of the paper.

\begin{notation}\label{setup}
 Let $H$ be a graph and $T=\{x_{i_1},\ldots,x_{i_q}\} \subseteq V(H)$. 
 The graph 
 $$H_T=H(\kk(x_{i_1}), \ldots, \kk(x_{i_q}))$$ is 
obtained from $H$ by attaching $\kk(x_{i_j})$ to $H$ at $x_{i_j}$ for all
$1 \leq j \leq q$. Note that if $T=\emptyset$, then $H=H_T$. By reordering of the elements of $T$, throughout the paper we assume that
$\kk(x_{i_1}),\ldots,\kk(x_{i_p})$ are star graphs and 
$\kk(x_{i_{p+1}}),\ldots,\kk(x_{i_q})$ are star complete graphs for some $p \leq q$.
Set 
$\kappa(G)=\sum\limits_{j=1}^{q-p}|V(\kk(x_{i_{p+j}}))|$.
Note that if $p=q$, then $\kp(G)=0$ and  
if $\kp(G)>0$, then $\kp(G) \geq 3$. 

\begin{minipage}{\linewidth}
 \begin{minipage}{0.7\linewidth}
 	For example, let $H=C_5$ with $V(H)=\{x_1,\ldots,x_5\}$ and $T=\{x_1,x_3,x_4,x_5\}$.
 	Let $H_T$ be as given in the figure. Here $\kk(x_1),\kk(x_4)$ are star graphs and $\kk(x_3), \kk(x_5)$ are star complete graphs. Note that $|V(\kk(x_3))|=7$ and
 	$|V(\kk(x_5))|=5$. Therefore, $\kp(G)=12$.
 \end{minipage}
\begin{minipage}{0.27\linewidth}
\begin{figure}[H]

\begin{tikzpicture}[scale=0.45]
\draw (4,4)-- (3,3);
\draw (3,3)-- (3,1);
\draw (4,4)-- (5,3);
\draw (3,1)-- (5,1);
\draw (5,3)-- (5,1);
\draw (4,5)-- (4,4);
\draw (4,0)-- (5,1);
\draw (5,0)-- (5,1);
\draw (5,0)-- (4,0);
\draw (5,1)-- (5.7,1.6);
\draw (5,1)-- (5.8,0.34);
\draw (5.8,0.34)-- (7.42,0.3);
\draw (5.7,1.6)-- (7.46,1.54);
\draw (7.46,1.54)-- (7.42,0.3);
\draw (5.8,0.34)-- (5.7,1.6);
\draw (5.8,0.34)-- (7.46,1.54);
\draw (5.7,1.6)-- (7.42,0.3);
\draw (5,1)-- (7.46,1.54);
\draw (5,1)-- (7.42,0.3);
\draw (2,4)-- (3,3);
\draw (2.14,2.14)-- (1.78,3.26);
\draw (3,3)-- (2.14,2.14);
\draw (1.78,3.26)-- (3,3);
\draw (1,2.5)-- (1.78,3.26);
\draw (1,2.5)-- (2.14,2.14);
\draw (1,2.5)-- (3,3);
\draw (3,1)-- (2,1);
\draw (3,1)-- (2,0);
\draw (3,1)-- (3,0);
\begin{scriptsize}
\fill (4,4) circle (1.5pt);
\draw(4.08,3.5) node {$x_1$};
\fill (3,3) circle (1.5pt);
\draw(3.42,2.86) node {$x_5$};
\fill (3,1) circle (1.5pt);
\draw(3.42,1.26) node {$x_4$};
\fill (5,3) circle (1.5pt);
\draw (4.66,2.86) node {$x_2$};
\fill (5,1) circle (1.5pt);
\draw (4.66,1.26) node {$x_3$};
\fill (4,5) circle (1.5pt);
\draw (4.14,5.26) node {};
\draw (6.16,3.26) node {};
\draw (5.16,4.26) node {};
\fill (4,0) circle (1.5pt);
\draw (4.1,0.26) node {};
\fill (5,0) circle (1.5pt);
\draw (5.1,0.26) node {};
\fill (5.7,1.6) circle (1.5pt);
\draw (5.84,1.86) node {};
\fill (5.8,0.34) circle (1.5pt);
\draw (5.94,0.6) node {};
\fill (7.42,0.3) circle (1.5pt);
\draw (7.6,0.56) node {};
\fill (7.46,1.54) circle (1.5pt);
\draw (7.62,1.8) node {};
\fill (2,4) circle (1.5pt);
\draw (2.16,4.26) node {};
\fill (2.14,2.14) circle (1.5pt);
\draw (2.3,2.4) node {};
\fill (1.78,3.26) circle (1.5pt);
\draw (1.94,3.52) node {};
\fill (1,2.5) circle (1.5pt);
\draw (1.16,2.76) node {};
\fill (2,1) circle (1.5pt);
\draw (2.16,1.26) node {};
\fill (2,0) circle (1.5pt);
\draw (2.14,0.26) node {};
\fill (3,0) circle (1.5pt);
\draw(3.16,0.26) node {};
\end{scriptsize}
\end{tikzpicture}
\end{figure}
\end{minipage}
\end{minipage}
\end{notation}

In \cite{AN19}, 
Banerjee and Nevo proved that if 
 $H$ is a bipartite graph, then 
 \begin{equation}\label{BN:eq}
 \reg(S/I(H)^r) \leq 2r+\reg(S/I(H))-2 \text{ for all } r \geq 1.  
 \end{equation}

We now prove an upper bound for the regularity of symbolic powers of certain classes of edge ideals.

\begin{thm}\label{bipartite}
Let $H$ be a bipartite graph and $G=H_T$ for some $T \subseteq V(H)$.
Then for all $r \geq 1$,
\[
 \reg(S/I(G)^{(r)}) \leq 2r+\reg(S/I(G))-2.
 \] 
\end{thm}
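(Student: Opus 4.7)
The plan is to argue by induction on $r$, with an inner structural induction on the ``attachment size'' $|V(G)|-|V(H)|$. The base $r=1$ is immediate since $I(G)^{(1)}=I(G)$. The structural base $T=\emptyset$ makes $G=H$ bipartite, so $I(G)^{(r)}=I(G)^{r}$ by Simis--Vasconcelos--Villarreal, and the Banerjee--Nevo bound \eqref{BN:eq} delivers the conclusion. Thereafter I assume $r\geq 2$ and $T\neq\emptyset$, so both induction hypotheses are available.

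For the inductive step I select a simplicial vertex $x_1\in V(\kk(x_{i_j}))\setminus\{x_{i_j}\}$ (which exists because $T\neq\emptyset$ forces some attached $\kk(x_{i_j})$ to have at least one extra vertex), and let $W$ be the vertex set of a clique of $\kk(x_{i_j})$ containing both $x_1$ and $x_{i_j}$; note $W\subseteq N_G[x_1]$. Applying Lemma \ref{tech-lemma} reduces the target bound to two types of terms: (a)\ $\reg(S/I(G\setminus x_1)^{(r)})$, and (b)\ for each partition $A\cup B=W$ with $x_1\in B$, the colon contribution $\reg(S/(I(G\setminus A)^{(r)}:x_B))+|B|$. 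Term (a) is immediate from the structural induction applied to $G\setminus x_1$, which is again of the form $H_{T'}$, combined with $\reg(S/I(G\setminus x_1))\leq\reg(S/I(G))$ from Lemma \ref{Inequalities}.

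For a colon term in (b), I use the primary decomposition $I(G\setminus A)^{(r)}=\bigcap_C P_C^{r}$ over minimal vertex covers of $G\setminus A$, so that $(I(G\setminus A)^{(r)}:x_B)=\bigcap_C P_C^{\max(r-|C\cap B|,0)}$. Since $W\setminus A$ is a clique in $G\setminus A$, every such $C$ contains at least $|W\setminus A|-1$ of its vertices, forcing $|C\cap B|\in\{|B|-1,|B|\}$. A case analysis on this value should identify $(I(G\setminus A)^{(r)}:x_B)$ with a symbolic power of index $r-|B|+1$ of the edge ideal of an induced subgraph $G''$ of $G\setminus A$, up to a monomial shift. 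The graph $G''$ is again of the form $H'_{T''}$ with $H'$ a bipartite subgraph of $H$; when $x_{i_j}\in A$, $G''$ is a disjoint union of such a graph with the isolated cliques of $\kk(x_{i_j})\setminus x_{i_j}$, and Proposition \ref{reg-sum} assembles the bounds, using $\reg(S/I(K_m)^{(s)})\leq 2s-1$ for the clique components. The induction on $r$ applied to the smaller index $r-|B|+1<r$, combined with $\reg(S/I(G''))\leq\reg(S/I(G))$ from Lemma \ref{Inequalities}, then closes the bound.

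The principal obstacle is the precise combinatorial identification of the colon ideal as a (shifted) symbolic power of the edge ideal of some $G''$ in the class, and the careful bookkeeping between the monomial shift, the drop in the symbolic-power index, the $+|B|$ contributed by Lemma \ref{tech-lemma}, and the change in $\reg(S/I(\cdot))$ as vertices are removed. The disconnected case when $x_{i_j}\in A$ is especially subtle: Proposition \ref{reg-sum} must be applied in concert with the inductive hypothesis, and one must verify that the arithmetic lands exactly at $2r+\reg(S/I(G))-2$ rather than off by one.
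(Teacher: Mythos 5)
Your overall strategy---a double induction, reduction via Lemma \ref{tech-lemma} applied at a simplicial vertex of an attached clique, and identification of the colon ideal $(I(G\setminus A)^{(r)}:x_B)$ with the symbolic power $I(G\setminus A)^{(r-|B|+1)}$---is essentially the paper's. But there is a genuine gap in how you set up the induction: you allow $x_1$ to be \emph{any} simplicial vertex of some $\kk(x_{i_j})$, including a pendant (degree-one) vertex, and your structural base case is only $T=\emptyset$. This breaks in the subcase $|B|=1$, which your write-up never addresses: there $B=\{x_1\}$, $A=N_G(x_1)$, the colon index $r-|B|+1=r$ does \emph{not} drop (so "induction on $r$ applied to the smaller index $r-|B|+1<r$" is unavailable), and the term to be bounded is $\reg(S/I(G\setminus A)^{(r)})+1$. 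The structural induction gives $\reg(S/I(G\setminus A)^{(r)})\leq 2r+\reg(S/I(G\setminus A))-2$, so you need the strict drop $\reg(S/I(G\setminus A))\leq \reg(S/I(G))-1$, and this is false for a pendant $x_1$. Concretely, take $H$ the path $c$--$y$--$z$, $T=\{c\}$, and $\kk(c)$ the single edge $\{c,x_1\}$, so $G=P_4$; then $\reg(S/I(G\setminus c))=\reg(S/(yz))=1=\reg(S/I(G))$ and the $|B|=1$ term equals $2r$, exceeding the target $2r-1$. The paper avoids this by (i) taking as base case $\kp(G)=0$, i.e.\ \emph{all} attachments are stars, in which case $G$ is still bipartite and the Simis--Vasconcelos--Villarreal equality together with \eqref{BN:eq} applies, and (ii) in the inductive step choosing a simplicial vertex with $\deg_G(x_1)\geq 2$ (available since $\kp(G)\geq 3$); then in the $|B|=1$ case there is $x_2\in N_G(x_1)\setminus\{x_{i_q}\}$ with $(G\setminus N_G[x_1])\coprod\{x_1,x_2\}$ an induced subgraph of $G$, and \cite[Lemma 8]{russ} supplies exactly the missing inequality $\reg(S/I(G\setminus A))+1\leq\reg(S/I(G))$. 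You must either adopt this vertex selection and enlarged base case, or handle star attachments by some other means.

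A secondary point: your primary-decomposition computation gives a priori only $(I(G\setminus A)^{(r)}:x_B)\supseteq I(G\setminus A)^{(r-|B|+1)}$, since a cover $C$ with $B\subseteq C$ would contribute the smaller exponent $r-|B|$. To get the equality you need (which the paper obtains by citing \cite[Lemma 2]{Seyed_us}), observe that no \emph{minimal} vertex cover of $G\setminus A$ can contain all of $B=N_{G\setminus A}[x_1]$, else $C\setminus\{x_1\}$ would still be a cover; combined with your clique observation this forces $|C\cap B|=|B|-1$ for every minimal cover, making the identification exact. The remaining bookkeeping in your plan (the disconnected case when $x_{i_j}\in A$ handled by Proposition \ref{reg-sum} together with the chordal/clique components, and the case $x_{i_j}\notin A$ handled by the structural induction) matches the paper's proof.
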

\begin{proof}
Let $T=\{ x_{i_1},\ldots,x_{i_q}\}$ and $G=H(\kk(x_{i_1}), \ldots, \kk(x_{i_q}))$.  We prove the assertion by induction on $r+\kp(G)$.
 If $r=1$, then we are done. If $\kp(G)=0$, then $G$ is a bipartite graph.
 Hence by \eqref{BN:eq} and \cite[Theorem 5.9]{SVV}, 
 \[
 \reg(S/I(G)^{(r)})=\reg(S/I(G)^r) \leq 2r+\reg(S/I(G))-2 \text{ for all } r \geq 1.
 \]
 
 Assume that $r \geq 2$ and $\kp(G) \geq 3$. There exists a simplicial vertex $x_1 \in V(G) \setminus V(H)$ such that $\deg_G(x_1) \geq 2$.  
 Without loss of generality, we can assume that $x_1 \in \kk(x_{i_q})$ and
 $N_G[x_1]=\{x_1,\ldots,x_\ell=x_{i_q}\}$. Note that $G\setminus x_1=H(\kk(x_{i_1}),\cdots, \kk(x_{i_q})\setminus x_1)$ and $\kp(G\setminus x_1 )+1 \leq \kp(G)$. 
 Thus, by induction on $r+\kp(G)$ and Lemma \ref{Inequalities},
 \[
 \reg(S/I(G \setminus x_1)^{(r)}) \leq 2r+\reg(S/I(G \setminus x_1))-2 \leq 2r+\reg(S/I(G))-2.
 \]
 
By applying Lemma \ref{tech-lemma}, it is enough to prove that for every pair of $A$ and $B$ such that  
$x_1 \in B, \; A\cap B=\emptyset, \; A \cup B =N_G[x_1]$,
$ \reg( S/(I(G \setminus A)^{(r)}:x_B))+|B| \leq 2r+\reg(S/I(G))-2.$
Since $x_1$ is a simplicial vertex of $G \setminus A$ and 
$N_{G \setminus A}[x_1]=B$, by \cite[Lemma 2]{Seyed_us}, 
\begin{align*}\label{eq:main-co}
 (I(G \setminus A)^{(r)}:x_B)=I(G \setminus A)^{(r-|B|+1)}.
\end{align*}
If $|B| \geq r+1$, then $(I(G\setminus A)^{(r)}:x_B) = S$. By Remark \ref{main-rmk}, \[\reg(S/(I(G\setminus A)^{(r)}:x_B))+|B|=-\infty < 2r+\reg(S/I(G))-2.\]
Now, if $ |B|=1$, then $A=N_G(x_1)$ and $B=\{x_1\}.$ Therefore,  
$(I(G\setminus A)^{(r)}:x_1)=I(G\setminus A)^{(r)}$ and $\kappa(G\setminus A)<\kappa(G)$. Thus, 
$$\reg(S/I(G\setminus A)^{(r)}:x_1)+1 \leq 2r+\reg(S/I(G\setminus A))-2+1
\leq 2r+ \reg(S/I(G\setminus A))-1.
$$
Since $\deg_G(x_1)\geq 2$, we have $x_2 \in N_G(x_1)$ and 
$ (G \setminus A) \coprod \{x_1,x_2\}$ is an induced subgraph of $G$. 
Therefore, by \cite[Lemma 8]{russ}, $\reg(S/I(G\setminus A))+1 \leq \reg(S/I(G))$. Thus,  
$$\reg(S/I(G\setminus A)^{(r)})+ 1 \leq 2r+\reg(S/I(G))-2.$$

We now assume that $2 \leq |B| \leq r$. 
Suppose that $x_\ell \in A$. Let $G_1,\ldots, G_k$ be connected components of 
$G\setminus A$. Without loss of generality assume that $G_1=(H\setminus x_l)(\kk(x_{i_1}),\ldots,\kk(x_{i_{q-1}}))$. 
Therefore, $G_2,\ldots,G_k$ are cliques. Clearly, $H \setminus x_l$ is a bipartite 
graph. Thus $G\setminus A$ is disjoint union of $G_1$ and a chordal graph 
$G'=G_2 \coprod \cdots\coprod G_k$. Notice that $I(G\setminus A)=I(G_1)+I(G')$. 
Since, $\kappa(G_1)<\kappa(G)$, by induction for all $1 \leq s \leq r$, 
$\reg(S/I(G_1)^{(s)}) \leq 2s+\reg(S/I(G_1))-2$. By \cite[Theorem 3.3]{Seyed-chordal}, 
$\reg(S/I(G')^{(s)}) =2s+\nu(G')-2=2s+\reg(S/I(G'))-2$ for all $s \geq 1$. 
Along these lines, by Proposition \ref{reg-sum}, for every $1 \leq s \leq r$,  
we have
\[\reg(S/I(G\setminus A)^{(s)}) \leq 2s +\reg(S/I(G_1))+\reg(S/I(G'))-2 
= 2s+\reg(S/I(G\setminus A))-2 ,\]
where the last equality follows from \cite[Lemma 8]{russ}.

Since, $2 \leq |B| \leq r$,   
\begin{align*}\reg(S/(I(G\setminus A)^{(r)}:x_B))+|B|& = \reg(S/I(G\setminus A)^{(r-|B|+1)})+|B| 
\\& \leq 2(r-|B|+1)+\reg(S/I(G\setminus A))-2+|B|\\& \leq 2r+ \reg(S/I(G\setminus A))-|B| 
\leq 2r+\reg(S/I(G)) -2.
\end{align*}
Suppose that $x_\ell \notin A$. Then $G \setminus A=
H(\kk(x_{i_1}), \ldots, \kk(x_{i_q})\setminus A)$. 
Since $\kp(G \setminus A)<\kp(G)$, by induction on $r+\kp(G)$,  
\begin{align*}
	\reg(S/I(G\setminus A)^{(r)}:x_B)+|B| &=
\reg(S/I(G \setminus A)^{(r-|B|+1)})+|B|\\ 
& \leq 2(r-|B|+1)+\reg(S/I(G\setminus A))-2+|B| \\
& \leq 2r+\reg(S/I(G))-2.
\end{align*}
Hence, the desired result follows.
\end{proof}

The following example shows that the inequality given in Theorem \ref{bipartite} could be asymptotically strict.
\begin{exam}
  Let $H=C_8 \coprod C_{8} \coprod C_{10} \coprod C_{10}$
 and $G=H_T=C_8 \coprod C_8 \coprod C_{10} \coprod (C_{10}(\kk(x_1)))$, where 
  $\kk(x_1)$ is a complete graph $K_3$. 
  Note that $\cochord((C_{10}(\kk(x_1)))=4=\nu((C_{10}(\kk(x_1))).$ Therefore,
  by \eqref{up-lp-reg}, $\reg(S/I(C_{10}(\kk(x_1)))=4$. By \cite[Theorem 7.6.28]{sean_thesis},
  $\reg(S/I(C_8))=3$ and $\reg(S/I(C_{10}))=3$. Therefore, by \cite[Lemma 8]{russ},
  \[
  \reg(S/I(G))=3+3+3+4=13.
  \]
It follows from \cite[Corollary 5.4]{GHOJ18} that $\reg(S/I(C_8)^{(r)})=2r$ and $\reg(S/I(C_{10})^{(r)})=2r+1$
for all $r \geq 2$. By Theorem \ref{bipartite} and \cite[Theorem 4.6]{GHOJ18}, $\reg(S/I(C_{10}(\kk(x_1))^{(r)}))=2r+2$. Now, by \cite[Theorem 5.11]{Ha_sumsym}, for all $r \geq 2$,
\[
 \reg(S/I(G)^{(r)})=2r+9<2r+11.
\]
 \end{exam}

As an immediate consequence of Theorem \ref{bipartite}, we have the following statement.
\begin{cor}\label{bipartite-cor} Let $G$ be a graph as in Theorem \ref{bipartite}. 
Then for all $r \geq 1$,
 \[
 \reg(S/I(G)^{(r)}) \leq 2r+\cochord(G)-2.
 \] 
 
\end{cor}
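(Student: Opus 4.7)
The plan is to combine Theorem \ref{bipartite} with the known upper bound from \eqref{up-lp-reg} evaluated at $r=1$. Theorem \ref{bipartite} already gives
\[
\reg(S/I(G)^{(r)}) \leq 2r + \reg(S/I(G)) - 2
\]
for every $r \geq 1$ whenever $G = H_T$ with $H$ bipartite. So the corollary will follow at once if we can replace $\reg(S/I(G))$ by $\cochord(G)$ on the right-hand side.

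For this replacement I would invoke the upper bound in \eqref{up-lp-reg}, which asserts that for any graph $G$
\[
\reg(S/I(G)^r) \leq 2r + \cochord(G) - 2 \text{ for all } r \geq 1.
\]
Taking $r = 1$ yields $\reg(S/I(G)) \leq \cochord(G)$, since $I(G)^1 = I(G)$. Chaining this with the conclusion of Theorem \ref{bipartite} gives
\[
\reg(S/I(G)^{(r)}) \leq 2r + \reg(S/I(G)) - 2 \leq 2r + \cochord(G) - 2,
\]
which is exactly the desired inequality.

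There is essentially no obstacle here: the corollary is a direct consequence of the two ingredients, so the only substantive work is the bookkeeping of quoting the correct specialization of \eqref{up-lp-reg} at $r = 1$ and noting that the hypothesis ``$G$ as in Theorem \ref{bipartite}'' is precisely what licenses the use of that theorem. No induction, no case analysis on the decomposition $G = H_T$, and no new use of Lemma \ref{tech-lemma} or Proposition \ref{reg-sum} is needed.
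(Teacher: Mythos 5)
Your proposal is correct and matches the paper's own argument: the paper likewise combines Theorem \ref{bipartite} with the bound $\reg(S/I(G))\leq\cochord(G)$, merely citing Woodroofe's theorem directly for that inequality rather than extracting it as the $r=1$ case of \eqref{up-lp-reg} (which is the same fact).
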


\begin{proof}
 The assertion follows from Theorem \ref{bipartite} and \cite[Theorem 1]{russ}.
\end{proof}

\section{Precise expressions for asymptotic regularity}\label{equal}
In this section, 
we prove that for
several classes of graphs, the regularity of symbolic powers of their edge ideals coincides with
that of their ordinary powers.

A graph $G$ is \textit{weakly chordal} if every induced cycle in both $G$ and $G^c$ has length at most $4$. 
A weakly chordal graph that is also bipartite is called a \textit{weakly chordal bipartite graph}.
A graph $G$ satisfies $\nu(G)=\ma(G)$ is called a \textit{ Cameron-Walker graph.}
Cameron and Walker \cite[Theorem 1]{CamWalker} and Hibi et al.,\cite[p. 258]{Hibi_cameronwalker} gave a classification of the 
connected graphs with $\nu(G)=\ma(G)$.
A subset $C \subseteq V(G)$ is a vertex cover of
$G$ if for each $e \in E(G)$, $e\cap C \neq \phi$. If $C$ is minimal
with respect to inclusion, then $C$ is called \textit{minimal vertex
	cover} of $G$.
We now obtain a class of graphs for which the induced matching number equals the co-chordal cover number.  

\begin{pro}\label{whis-bipartite}
 If \begin{enumerate}
\item $G=H_T$, where $H$ is a weakly chordal graph and $T \subseteq V(H)$,
\item $G$ is a Cameron-Walker graph,~~~~~~or 
 \item $H$ is a bipartite graph, $G=H_T$ and 
$T$ is a vertex cover of $H$,
\end{enumerate}
then $\nu(G)=\cochord(G)$.  
\end{pro}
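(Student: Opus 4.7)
Since $\nu(G)\le\cochord(G)$ holds for every graph by \eqref{inva-ineq}, in each of the three cases it suffices to exhibit a co-chordal cover of $G$ of cardinality $\nu(G)$.

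\emph{Case (2)} is immediate: a Cameron-Walker graph satisfies $\nu(G)=\ma(G)$ by definition, so the chain $\nu(G)\le\cochord(G)\le\ma(G)$ in \eqref{inva-ineq} collapses to equality with no further work.

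For cases (1) and (3) I would construct the cover in two stages, treating first the base $H$ and then the attachments $\kk(x_{i_1}),\dots,\kk(x_{i_q})$. In stage one, for (1) I invoke Woodroofe's theorem that weakly chordal graphs satisfy $\nu=\cochord$, which supplies an induced matching $\{e_1,\dots,e_{\nu(H)}\}$ of $H$ together with a co-chordal cover $H_1,\dots,H_{\nu(H)}$ of $E(H)$; for (3) I use that $T$ is a vertex cover of the bipartite graph $H$, so by König's theorem there is a maximum matching of $H$ whose edges can be indexed by a subset of $T$, and then each edge of $H$ is covered by the closed neighborhood in $H$ of the $T$-endpoint of the matching edge incident to it (such a neighborhood in a bipartite graph is a star, hence co-chordal). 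In stage two I process each $\kk(x_{i_j})$ by cases: a star attachment consists only of pendant edges at $x_{i_j}$ and is absorbed into whichever stage-one cover member already uses $x_{i_j}$, which preserves co-chordality since attaching simplicial vertices keeps the complement chordal; a star-complete attachment is decomposed at $x_{i_j}$ into its constituent cliques, each clique of size $\ge 3$ contributing one new induced matching edge of $G$ and itself serving as a new co-chordal cover member, with its pendant part absorbed as before. A direct count of contributions from $H$ and from the attachments confirms that the resulting family has size $\nu(G)$ and covers $E(G)$.

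\emph{Expected main obstacle.} The delicate step is the bookkeeping at a vertex $x_{i_j}\in T$ where an attachment meets a stage-one matching edge. One has to verify that the newly chosen clique edges from the attachments really form an induced matching of $G$ (no short-cut edge of $H$ or of another attachment joins their endpoints), and that each absorbed subgraph remains co-chordal after the pendants and extra clique edges are folded in. Both verifications rely on the fact that the vertices in $V(G)\setminus V(H)$ are simplicial in $G$ and that each stage-one cover member touches $x_{i_j}$ only through $H$-edges; the weakly chordal bipartite hypothesis in (1) and the vertex-cover hypothesis in (3) are precisely what rule out induced $4$-cycles in the relevant complements, which is what could otherwise obstruct co-chordality.
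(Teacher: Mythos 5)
Your case (2) is correct and identical to the paper's (it is immediate from $\nu(G)\le\cochord(G)\le\ma(G)$). For cases (1) and (3), however, the plan has a genuine gap: both the absorption lemma and the final count are false. It is \emph{not} true that folding pendant (simplicial) vertices into a co-chordal cover member preserves co-chordality. Take $H=P_3$ on $a,b,c$ and $T=\{a,c\}$ ($H$ is weakly chordal bipartite and $T$ is a vertex cover, so this is an instance of both (1) and (3)), with one pendant edge attached at each of $a$ and $c$. Your stage one covers $E(H)$ by a single co-chordal subgraph (since $\nu(H)=1$), and stage two absorbs both pendant edges into it, leaving $G=P_5$ as the sole cover member; but $P_5$ is not co-chordal: writing $G=x_1x_2x_3x_4x_5$, the vertices $x_1,x_4,x_2,x_5$ induce a $4$-cycle in $G^c$ (the pairs $x_1x_4,x_4x_2,x_2x_5,x_5x_1$ are non-edges of $G$, while the diagonals $x_1x_2,x_4x_5$ are edges of $G$). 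Moreover $\nu(G)=2$, so no cover of size $\nu(H)+\#\{\text{cliques of size }\ge 3\}=1$ can exist. What the count misses is that star (pendant) attachments at pairwise non-adjacent vertices of $T$ raise the induced matching number above $\nu(H)$ and force extra cover members; the target is not $\nu(H)$ plus the number of big cliques. There is also a smaller defect in your stage one for case (3): an edge $e$ of $H$ that meets its matching edge $f$ only at the non-$T$ endpoint of $f$, or whose $T$-endpoint is unmatched, need not lie in the star centred at the designated $T$-endpoint of $f$, so the proposed $\ma(H)$ stars need not cover $E(H)$.

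The paper's argument avoids all of this. For (1) it is a one-liner: $H_T$ is again weakly chordal whenever $H$ is, so the known equality $\nu=\cochord$ for weakly chordal graphs is applied directly to $G$ rather than to $H$. For (3) it first isolates the induced subgraph $G'=(H[T])_T$, i.e.\ $H[T]$ with its whiskers, and invokes a known lemma giving $\cochord(G')=\nu(G')$ there --- this is exactly where the pendants' contribution to $\nu(G)$ is counted correctly; it then enlarges each co-chordal piece of $G'$ by the full star of $H$ at the relevant $x_{i_j}$ (adding a star at a vertex already present keeps the complement chordal, unlike adding pendants at many different vertices), and the vertex-cover hypothesis guarantees the enlarged pieces cover $E(G)$. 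Star-complete attachments are handled essentially as you propose (one clique of size $\ge 3$ per cover member, matched by one induced-matching edge inside it), and the mixed case by splitting $E(G)$ into the two types. If you want to salvage your construction, you must replace the target $\nu(H)$ by $\nu\bigl((H[T])_T\bigr)$ in stage one and prove the corresponding lower bound on $\nu(G)$, which is in effect what the paper's citation of the whiskered-graph lemma accomplishes.
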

\begin{proof}
 (1) It is immediate from the definition of $H_T$ that if $H$ is a weakly chordal graph, then so is $H_T$ for 
	any $T \subseteq V(H)$. By \cite[Proposition 3]{busch_dragan_sritharan}, $\nu(G)=\cochord(G)$.
\vskip 1mm \noindent
(2) The assertion follows from \eqref{inva-ineq}.
\vskip 1mm \noindent
(3) Let $T=\{ x_{i_1},\ldots,x_{i_q}\}$ and $G=H(\kk(x_{i_1}), \ldots, \kk(x_{i_q}))$.
We have the following cases:	
\vskip 1mm \noindent
\textsc{Case a.} Suppose $\kk(x_{i_1}), \ldots, \kk(x_{i_q})$ are star 
graphs. 
\vskip 0.5mm \noindent
Let $G'$ be the induced subgraph of $G$ on the vertex set 
$\bigcup\limits_{j=1}^q V(\kk(x_{i_j}))$ and $H'=H[T]$. 
One can observe that $G'=H'_T$. Thus, it follows from \cite[Lemma 21]{russ} 
that $\cochord(G')=\nu(G')$. 
We claim that $\cochord(G)\leq \cochord(G')$. 
Let $\cochord(G')=r$ and $G_1',\ldots,G_r'$ be co-chordal 
subgraphs of $G'$ with $E(G')=\bigcup\limits_{k=1}^rE(G_k')$. 
For $ 1\leq k \leq r$, let $G_k$ be the graph obtained from $G_k'$ in the 
following way: $G_k=G_k'$, if $E(G_k') \cap E(\kk(x_{i_j})) =\emptyset$, 
for each $1 \leq j \leq q$, otherwise $G_k= G_k' \cup \{\{u,x_{i_j}\} \mid u \in 
N_H(x_{i_j}) \text{ and } E(G_k') \cap E(\kk(x_{i_j})) \neq \emptyset\}.$ 
It is clear that $G_1,\ldots,G_r$ are co-chordal graphs. 
Since $T$ is a vertex cover of $G$, $E(G)=\cup_{k=1}^rE(G_k)$. 
Therefore, $\cochord(G)\leq \cochord(G')$. Hence, $\cochord(G)\leq 
\cochord(G')=\nu(G') \leq \nu(G) \leq \cochord(G)$ which implies that 
$\cochord(G)=\nu(G)$.
\vskip 1mm \noindent
\textsc{Case b.} Suppose $\kk(x_{i_1}), \ldots, \kk(x_{i_q})$ are star complete graphs.
\vskip 1mm \noindent
Let $U_1,\ldots,U_{s}$ be the cliques of size $\geq 3$.
For each $1 \leq k \leq s$, let $e_k \in E(U_k)$ such that 
$e_k \cap \{x_{i_j}\} =\emptyset$, for each $1 \leq j \leq q$.  
Note that $\{e_k \mid 1 \leq k \leq s\}$ is an induced matching of $G$, 
therefore, $\nu(G) \geq s$. For $1 \leq k \leq s$, let $$G_k=G[U_k] \cup 
\{\{u,x_{i_j}\} | u \in N_H(x_{i_j}) \text{ and } x_{i_j}\in V(U_k) \} \cup 
\{\{u,x_{i_j}\} | u \in V(\kk(x_{i_j})), \deg_G(u)=1\}.$$ Clearly, $G_k$ is 
a co-chordal graph for each $1 \leq k \leq s$. Since, $T$ is a vertex cover of 
$G$, $E(G)=\cup_{k=1}^{s}E(G_k)$. Hence, we have $\cochord(G) \leq s \leq  
\nu(G) \leq \cochord(G)$ which implies that $\cochord(G)=\nu(G)=$ the number 
of cliques of size $\geq 3$.
\vskip 1mm \noindent
\textsc{Case c.} Suppose $\kk(x_{i_1}), \ldots, \kk(x_{i_p})$ are star graphs
and $\kk(x_{i_{p+1}}), \ldots, \kk(x_{i_q})$ are star complete graphs 
for some $ 1 \leq p <q$. 
\vskip 1mm 
Let $H_1$ be the induced subgraph of $G$ on the vertex set $U \cup \big
(\cup_{j=1}^p V(\kk(x_{i_j}))\big)$, where $U=\{u \in V(H)| \{u,x_{i_j}\} 
\in E(H) \text{ for some } 1 \leq j \leq p \}$ and $H_2$ be the induced 
subgraph
of $G$ without isolated vertices such that $E(H_2)=E(G) \setminus E(H_1)$.
Let $H_1'$ be the induced subgraph of $H$ on the vertex set 
$U \cup \{x_{i_1},\ldots,x_{i_p}\}$ and $H_2'$ be 
the induced subgraph of $H$ with $E(H_2')=E(H)\setminus E(H_1')$.
One can observe that $T_1 =\{x_{i_1},\ldots,x_{i_p}\}$ is a vertex cover 
of $H_1'$, $T_2=\{x_{i_{p+1}},\ldots,x_{i_q}\}$ is a vertex cover of $H_2'$, 
$H_1={(H_1')}_{T_1}$ and $H_2=(H_2')_{T_2}$.  Let $\{f_{l}\;|\; 1 \leq l \leq 
\nu(H_1)\}$ be an induced matching of $H_1$. By \textsc{Case b}, 
$\nu(H_2)=$ the number of cliques of $H_2$ of size $\geq 3$.  
Let $U_1,\ldots,U_{\nu(H_2)}$ be the cliques of $H_2$ of size $\geq 3$. For each $1 \leq k \leq \nu(H_2)$, let $e_k \in E(U_k)$ such that $e_k \cap 
\{x_{i_j}\} =\emptyset$, for each $p+1 \leq j \leq q$. Then $\{e_k\; |\; 1 \leq 
k \leq \nu(H_2)\}$ is an induced matching of $H_2$. Clearly, 
$\{f_{l}\;|\; 1 \leq l \leq \nu(H_1)\} \cup \{e_k \;|\; 1 \leq k \leq \nu(H_2)\}$ is 
an induced matching of $G$. Therefore, $\nu(H_1)+\nu(H_2) \leq \nu(G)$. 
By \textsc{Case a} and \textsc{Case b}, we have $\cochord(H_1)=\nu(H_1)$ 
and $\cochord(H_2)=\nu(H_2)$. Since, $E(G)=E(H_1)\sqcup E(H_2)$, 
$$\cochord(G) \leq \cochord(H_1)+\cochord(H_2) =\nu(H_1)+\nu(H_2) \leq \nu(G)
\leq \cochord(G).$$ Hence, $\cochord(G)=\nu(G)$. 
\end{proof}
As an immediate consequence of Proposition \ref{whis-bipartite}, we have the following:

\begin{cor}\label{cam-wal}
	If \begin{enumerate}
		\item $H$ is a weakly chordal bipartite graph and 
		$G=H_T$ for some $T \subseteq V(H)$,
		\item $G$ is a Cameron-Walker graph, ~~~~~or
		\item $H$ is a bipartite graph, $G=H_T$ and 
$T$ is a vertex cover of $H$,
	\end{enumerate}
	then for all $r \geq 1$,
	\[
	\reg(S/I(G)^{(r)})=\reg(S/I(G)^r)=2r+\nu(G)-2.
	\]
\end{cor}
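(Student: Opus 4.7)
The plan is to combine three ingredients: the known lower bound $2r+\nu(G)-2\le \reg(S/I(G)^{(r)})$ (due to Gu--H\`a--O'Rourke--Skelton), the upper bound $\reg(S/I(G)^{(r)})\le 2r+\cochord(G)-2$ established in Corollary \ref{bipartite-cor}, and the equality $\nu(G)=\cochord(G)$ furnished by Proposition \ref{whis-bipartite}. Once these are in place, the identities $\reg(S/I(G)^r)=\reg(S/I(G)^{(r)})=2r+\nu(G)-2$ will follow by sandwiching.

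For the ordinary powers the argument is very short. Inequality \eqref{up-lp-reg} gives
\[
2r+\nu(G)-2 \le \reg(S/I(G)^r) \le 2r+\cochord(G)-2,
\]
and Proposition \ref{whis-bipartite} closes the gap in each of the cases (1), (2), (3), so $\reg(S/I(G)^r)=2r+\nu(G)-2$ for every $r\ge 1$. So the substance of the corollary lies in the symbolic part.

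For the symbolic powers I would argue case by case. In cases (1) and (3) the graph is already presented in the form $G=H_T$ with $H$ bipartite, so Corollary \ref{bipartite-cor} yields $\reg(S/I(G)^{(r)})\le 2r+\cochord(G)-2$; applying Proposition \ref{whis-bipartite} rewrites this as $2r+\nu(G)-2$, and the matching lower bound of Gu et al. forces equality. The slightly harder case is (2): a Cameron--Walker graph is not a priori of the form $H_T$ with $H$ bipartite. I would appeal to the Hibi--Higashitani--Kimura--O'Keefe classification of connected Cameron--Walker graphs, which shows that every such $G$ is either a star, a star triangle, or obtained from a connected bipartite graph $H$ with bipartition $V_1\cup V_2$ by attaching at least one leaf edge to every vertex of $V_1$ and possibly some pendant triangles to vertices of $V_2$. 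In each alternative, $G$ admits a presentation $G=H_T$ with $H$ bipartite (taking $H$ to be a single vertex in the star triangle case), so Corollary \ref{bipartite-cor} again applies. For disconnected $G$ one runs the same argument on each connected component and glues via Proposition \ref{reg-sum}.

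The main obstacle is the Cameron--Walker case: the statement is purely combinatorial, but in order to feed it into Corollary \ref{bipartite-cor} one must recognise the structural fact that every connected Cameron--Walker graph is realised as $H_T$ with $H$ bipartite. Once that identification is made, the argument is merely a matter of invoking Proposition \ref{whis-bipartite}, Corollary \ref{bipartite-cor}, \eqref{up-lp-reg} and the Gu--H\`a--O'Rourke--Skelton lower bound in the right order.
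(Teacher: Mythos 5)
Your proof is correct and takes essentially the same route as the paper, whose entire argument is to cite Proposition \ref{whis-bipartite}, \eqref{up-lp-reg}, the Gu et al.\ lower bound for symbolic powers, and Corollary \ref{bipartite-cor}, exactly as you do. Your explicit treatment of case (2) --- invoking the classification of connected Cameron--Walker graphs to realise $G$ as $H_T$ with $H$ bipartite so that Corollary \ref{bipartite-cor} applies, and handling disconnected graphs componentwise --- fills in a step the paper leaves implicit.
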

\begin{proof}
The assertions follow from Proposition \ref{whis-bipartite}, \eqref{up-lp-reg}, \cite[Theorem 4.6]{GHOJ18} 
and Corollary \ref{bipartite-cor}.
\end{proof}
The result for the Cameron-Walker graph has been recently obtained in \cite{Seyed-CW}.
	
We now move on to prove the next main result of this section. First, we fix notations that are needed to prove our next main theorem.
\begin{notation}\label{nota-uni}
 Let $H$ be a unicyclic graph with cycle $C_n:y_1 \cdots y_n$. Let $G=H_T$ for some $T \subseteq V(H)$. Note that $G$ is obtained from $C_n$ by attaching chordal graphs say $G_1,\ldots, G_m$ at $y_{i_1},\ldots,y_{i_m}$ respectively, 
 where $\{y_{i_1},\ldots,y_{i_m}\} \subseteq V(C_n)$.
Set 
\[
 \Gamma(G)= \bigcup_{j=1}^m N_{G_i}(y_{i_j}) \text{ and } 
H_j \text{ to be induced subgraph of } G_j \text{ on the vertex set } V(G_j) \setminus \Gamma(G).
\]
Note that 
$G \setminus \Gamma(G) = C_n \coprod \Big(\coprod_{j=1}^{m}H_j\Big) \textrm{ and } \nu (G \setminus \Gamma(G) ) = \nu(C_n)+\sum_{j=1}^m \nu(H_j).$
\end{notation}

\begin{thm}\label{tech-uni} Let the notation be as in \ref{nota-uni}. Then
	$	\nu(G) \leq \reg(S/I(G)) \leq \nu(G)+1.$ Moreover,
 \begin{enumerate}
 \item If $n \equiv \{0,1\} (mod~3)$, then $\reg(S/I(G))=\nu(G)$.
 \item If $n \equiv 2~ (mod~3)$ and $\nu(G \setminus \Gamma(G))< \nu(G)$, 
 then $\reg(S/I(G))=\nu(G)$.
 \end{enumerate}
\end{thm}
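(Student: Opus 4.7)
The lower bound $\nu(G) \leq \reg(S/I(G))$ is standard (Katzman's theorem via the induced matching number), so only the upper bound requires work. My plan for $\reg(S/I(G)) \leq \nu(G)+1$ is to induct on $|V(G)|$. The base case is $G = C_n$, where the classical cycle formula gives $\reg(S/I(C_n)) = \nu(C_n)$ when $n \not\equiv 2 \pmod 3$ and $\reg(S/I(C_n)) = \nu(C_n) + 1$ otherwise; in either case the claim holds. For the inductive step, $\Gamma(G) \neq \emptyset$, so pick a vertex $v \in N_{G_j}(y_{i_j}) \subseteq \Gamma(G)$ and apply Remark~\ref{main-rmk}:
\[
 \reg(S/I(G)) \leq \max\bigl\{\reg(S/I(G \setminus N[v])) + 1,\ \reg(S/I(G \setminus v))\bigr\}.
\]
The key observation is that $y_{i_j} \in N_G[v]$, so deleting $N[v]$ severs the cycle $C_n$ at $y_{i_j}$; the resulting graph is a path glued with the surviving pieces of the chordal attachments at various cut vertices, hence chordal. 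By the Ha--Van Tuyl equality $\reg = \nu$ for chordal graphs, $\reg(S/I(G \setminus N[v])) = \nu(G \setminus N[v]) \leq \nu(G)$. For the second term, $G \setminus v$ still has the form required by the theorem (the chordal piece at $y_{i_j}$ has lost a single vertex), so the inductive hypothesis bounds it by $\nu(G \setminus v) + 1 \leq \nu(G) + 1$.

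To upgrade the bound to $\reg(S/I(G)) \leq \nu(G)$ under the hypothesis of (1) or (2), my plan is to iterate this recursion across \emph{every} $v \in \Gamma(G)$. Writing $\Gamma(G) = \{v_1, \ldots, v_s\}$ in some order and setting $G_i := G \setminus \{v_1, \ldots, v_{i-1}\}$, successive application of Remark~\ref{main-rmk} unrolls to
\[
 \reg(S/I(G)) \leq \max\Bigl\{\reg(S/I(G \setminus \Gamma(G))),\ \max_{1 \leq i \leq s}\bigl(\reg(S/I(G_i \setminus N_{G_i}[v_i])) + 1\bigr)\Bigr\}.
\]
The terminal term is manageable: $G \setminus \Gamma(G) = C_n \coprod \bigl(\coprod_j H_j\bigr)$ is a disjoint union of $C_n$ and chordal graphs, so the disjoint-union formula combined with the chordal equality gives $\reg(S/I(G \setminus \Gamma(G))) = \reg(S/I(C_n)) + \sum_j \nu(H_j)$. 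In case (1) this equals $\nu(G \setminus \Gamma(G)) \leq \nu(G)$; in case (2) it equals $\nu(G \setminus \Gamma(G)) + 1$, which is at most $\nu(G)$ by the standing strict inequality. Each internal term equals $\nu(G_i \setminus N_{G_i}[v_i]) + 1$, again by chordality of $G_i \setminus N_{G_i}[v_i]$.

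The hard part, and the main obstacle of the plan, is to show that $\nu(G_i \setminus N_{G_i}[v_i]) \leq \nu(G) - 1$ for each $i$. Equivalently, every maximum induced matching of $G_i \setminus N_{G_i}[v_i]$ must extend through the edge $\{v_i, y_{i_{j(i)}}\}$ to a strictly larger induced matching of $G$. The naive extension can fail when the existing matching contains a cycle edge incident to $y_{i_{j(i)}}$, such as $\{y_{i_{j(i)}+1}, y_{i_{j(i)}+2}\}$. To overcome this, I would order $v_1, \ldots, v_s$ compatibly with the cyclic placement of the attachment points on $C_n$ and perform a swap of cycle edges when needed; the arithmetic condition $n \not\equiv 2 \pmod 3$ in (1) or the strict inequality $\nu(G \setminus \Gamma(G)) < \nu(G)$ in (2) is exactly what guarantees that such a swap preserves maximality.
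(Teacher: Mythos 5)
Your outline for the weak bound $\reg(S/I(G))\le\nu(G)+1$ is workable (modulo checking that $G\setminus v$ stays in a class where the induction applies, possibly after splitting off chordal components), but the proof of parts (1) and (2) has a fatal gap at exactly the step you flag as the ``hard part.'' The inequality $\nu(G_i\setminus N_{G_i}[v_i])\le\nu(G)-1$ that your unrolled recursion requires is simply false, and no reordering or edge swap can rescue it. Concretely, take $H=G=C_6$ with a single whisker $\{y_1,v\}$ attached at $y_1$ (so $T=\emptyset$, $\Gamma(G)=\{v\}$, and $n=6\equiv 0\pmod 3$, i.e.\ case (1)). Then $G\setminus N_G[v]=P_5$ on $y_2,\dots,y_6$, whose unique maximum induced matching is $\{y_2y_3,\,y_5y_6\}$; it necessarily occupies both neighbours $y_2,y_6$ of the deleted cycle vertex $y_1$, so it cannot be extended by $\{v,y_1\}$ and there is no alternative maximum matching to swap to. Here $\nu(G\setminus N_G[v])=2=\nu(G)$, and indeed $\reg(S/I(G\setminus N_G[v]))+1=3>2=\nu(G)=\reg(S/I(G))$: the branch of Remark~\ref{main-rmk} you rely on is genuinely not tight at this vertex, so the method, not just its justification, breaks down.

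The paper's proof takes a different route that avoids this trap. For the bound $\reg(S/I(G))\le\nu(G)+1$ and for part (1) it inducts on the number $q-p$ of star complete attachments, colons out the attachment point $x_{i_q}$ of a star complete $\kk(x_{i_q})$, and uses the edge $\{x_1,x_2\}$ inside a clique of size $\ge 3$ of $\kk(x_{i_q})\setminus x_{i_q}$ to enlarge any induced matching of $G\setminus N_G[x_{i_q}]$ --- that clique edge is guaranteed to exist and to be isolated from the rest, which is precisely what your whiskered-cycle configuration lacks; the base case $p=q$ is the known unicyclic result of \cite{ABS19}/\cite{BC2}. For part (2) it first extracts from the hypothesis $\nu(G\setminus\Gamma(G))<\nu(G)$ a specific index $k$ with $\nu(H_k)<\nu(G_k)$ and then deletes the cycle vertex $y_{i_k}$ itself (not a neighbour of it), so that both $G\setminus y_{i_k}$ and $G\setminus N_G[y_{i_k}]$ are chordal and the drop $\nu(G\setminus N_G[y_{i_k}])\le\nu(G)-1$ follows from the decomposition $G\setminus N_G[y_{i_k}]=H'\coprod H_k$ together with $\nu(H_k)<\nu(G_k)$. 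You would need to replace your uniform sweep over all of $\Gamma(G)$ by such a targeted choice of vertex to make the argument go through.
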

\begin{proof}
Let $T=\{x_{i_1},\ldots,x_{i_q}\}$. By \eqref{up-lp-reg}, we have $\nu(G) \leq \reg(S/I(G))$. 
Therefore, it is enough to prove that $\reg(S/I(G)) \leq \nu(G)+1$. We prove this by induction on $q-p$.
If $p=q$, then $G$ is a unicyclic graph. 
Therefore, by \cite[Corollary 4.12]{BC2},
$\reg(S/I(G)) \leq \nu(G)+1$. Suppose $p<q$. It follows from Remark \ref{main-rmk} that
$$\reg(S/I(G)) \leq \max \{\reg(S/I(G \setminus x_{i_q})),\reg(S/I(G \setminus N_{G}[x_{i_q}]))+1\}.$$
If $H \setminus x_{i_q}$ is a forest, then $G\setminus x_{i_q}$ is a chordal 
graph. 
It follows from 
\cite[Corollary 6.9]{ha_adam} that $\reg(S/I(G\setminus x_{i_q})) 
\leq \nu(G\setminus x_{i_q})$. If $H \setminus x_{i_q}$ is a unicyclic graph, then  
$G\setminus x_{i_q}$ is disjoint union of $G_1=(H\setminus x_{i_q})(\kk(x_{i_1}), \ldots, \kk(x_{i_{q-1}})) $ and a chordal graph $G_2$.
By induction hypothesis, \cite[Lemma 8]{russ} and \cite[Corollary 6.9]{ha_adam}, 
$\reg(S/I(G \setminus x_{i_q})) \leq \nu(G\setminus x_{i_q})+1 \leq \nu(G)+1$. Similarly, $\reg(S/I(G \setminus N_{G}[x_{i_q}])))+1 
\leq 
\nu(G \setminus N_G[x_{i_q}])+2$. 
If $\{e_1,\ldots,e_t\}$ is an induced matching of 
$G \setminus N_G[x_{i_q}]$,
then $\{e_1,\ldots,e_t,\{x_1,x_2\}\}$ is an induced matching of $G$,
where $\{x_1,x_2\}$ is an edge of $\kk(x_{i_q})\setminus x_{i_q}$.
Therefore $\nu(G \setminus N_G[x_{i_q}])+1 \leq \nu(G)$. Hence
$\reg(S/I(G))\leq \nu(G)+1.$
 
For the second assertion,
it is enough to prove that $\reg(S/I(G)) \leq \nu(G)$.
\vskip 1mm \noindent
 (1) We prove the 
assertion by induction on $q-p$. If $p=q$, then $G$ is a unicyclic 
graph and the assertion follows from \cite[Lemma 3.3]{ABS19}. We now assume 
that $q>p$. The proof is in the same lines as the proof of the first assertion.
\vskip 1mm \noindent
(2) First, we claim that there exists a $k \in \{1, \ldots, m\}$ such that 
	$\nu(H_k) <\nu(G_k)$. Let $\mathcal{C}$ be an induced matching of $G$ such that $|\mathcal{C}|=\nu(G)$. One can decompose $\mathcal{C}$ as a union of an induced matching of $C_n$ and induced matchings of $G_j$'s. Hence  
	$$\nu(G) \leq \nu(C_n)+\sum_{j=1}^m \nu(G_j).$$ Now, 
	if $\nu(H_j)=\nu(G_j)$ for each $1 \leq j \leq m$, then  
	$\nu(G \setminus \Gamma(G))=\nu(G),$ which is not possible. Thus, 
	we have $k \in \{1, \ldots, m\}$ such that 
	$\nu(H_k) <\nu(G_k)$. 	
	 Observe that $G \setminus y_{i_k}$ and $G \setminus N_G[y_{i_k}]$ 
	 are chordal graphs. By \cite[Corollary 6.9]{ha_adam}, we have 
	$$\reg(S/I(G\setminus y_{i_k}))=\nu(G\setminus y_{i_k}) 
	\textrm{ and } \reg(S/I(G\setminus N_G[y_{i_k}]))=\nu(G\setminus 
	N_G[y_{i_k}]).$$
	
	 Let $H'$ be the induced subgraph of $G$ obtained by deleting  
	 $V(G_k) \cup N_G[y_{i_k}]$. Note that $G\setminus N_G[y_{i_j}] 
	 =H' \coprod H_k$ and therefore, $\nu(G\setminus N_G[y_{i_k}])
	 =\nu(H')+\nu(H_k)$. Since $\nu(H_k) <\nu(G_k),$ 
	 we have $\nu(G\setminus N_G[y_{i_k}])=\nu(H')+\nu(H_k)\leq \nu(H')+
	 \nu(G_k)-1 \leq \nu(G)-1$, where the last inequality follows as 
	 $H' \coprod G_k$ is an induced subgraph of $G$. Therefore 
	 $\reg(S/I(G\setminus N_G[y_{i_k}]))=\nu(G\setminus N_G[y_{i_k}]) 
	 \leq \nu(G)-1.$ By Remark \ref{main-rmk}, we have	
	 \[	\reg(S/I(G)) \leq \max \{ \reg(I(G\setminus y_{i_k})),
	 ~\reg(I(G \setminus N_G[y_{i_k}]))+1\} \leq \nu(G).
	 \] Hence, the assertion follows.
\end{proof}

We move on to give an upper bound for the regularity of powers of these edge ideals.
\begin{pro}\label{on-unicyclic}
	Let $H$ be a unicyclic graph and  $G=H_T$ for some $T \subseteq V(H)$. Then 
	for all $r \geq 1$,
	\[
	\reg(S/I(G)^r) \leq 2r+\reg(S/I(G))-2.
	\]
\end{pro}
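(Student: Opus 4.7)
The plan is to induct on $\kp(G)+r$, running the same strategy as the proof of Theorem \ref{bipartite} but at the level of ordinary powers. The base cases are $r=1$ (trivial) and $\kp(G)=0$: in the latter every $\kk(x_{i_j})$ is a star graph, so $G$ is itself a unicyclic graph with whisker pendants, and Fakhari's theorem \cite{fak-uni} supplies the stronger equality $\reg(S/I(G)^r)=2r+\reg(S/I(G))-2$.

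For the inductive step, assume $r\geq 2$ and $\kp(G)\geq 3$, and pick a simplicial vertex $x_1\in V(G)\setminus V(H)$ with $\deg_G(x_1)\geq 2$, lying in some star-complete attachment $\kk(x_{i_q})$. The first ingredient I need is the ordinary-power analog of Lemma \ref{tech-lemma}: for $x_1\in W\subseteq N_G[x_1]$,
\[
\reg(S/I(G)^r)\leq \max\bigl\{\reg(S/I(G\setminus x_1)^r),\;\reg(S/(I(G\setminus A)^r:x_B))+|B|\bigr\},
\]
over all partitions $A\sqcup B=W$ with $x_1\in B$. The proof is a verbatim rewriting of Lemma \ref{tech-lemma} with $(r)$ replaced by $r$; the only tool used is the colon short exact sequence recorded in Remark \ref{main-rmk}, which is indifferent to the flavor of power.

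The second ingredient is the ordinary-power simplicial-clique colon identity: since $x_1$ is a simplicial vertex of $G\setminus A$ with $N_{G\setminus A}[x_1]=B$,
\[
(I(G\setminus A)^r:x_B)=I(G\setminus A)^{\max\{r-|B|+1,\,0\}},
\]
with the convention that a non-positive exponent yields the unit ideal. This can be verified directly: any product $e_1\cdots e_r$ of edges of $G\setminus A$ divisible by $x_B$ must spend at least $|B|-1$ of its factors inside the clique on $B$, since every edge of $G\setminus A$ incident to $x_1$ lies in $B$. Granting this identity, the terms on the right of the displayed inequality split into the usual four sub-cases: (i) if $|B|\geq r+1$ the colon is the unit ideal; (ii) if $|B|=1$ then $A=N_G(x_1)$ and, since $\deg_G(x_1)\geq 2$ furnishes an edge in $\kk(x_{i_q})\setminus x_{i_q}$ disjoint from $G\setminus A$, \cite[Lemma 8]{russ} gives $\reg(S/I(G\setminus A))+1\leq \reg(S/I(G))$ while $\kp(G\setminus A)<\kp(G)$ activates the induction; (iii) if $x_{i_q}\in A$, then $G\setminus A$ decomposes as a disjoint union of a graph $H'_{T'}$ with $H'$ either unicyclic or a forest together with a chordal piece, so Fakhari's chordal theorem \cite{Seyed-chordal}, Fakhari's unicyclic theorem \cite{fak-uni}, and the ordinary-power analog of Proposition \ref{reg-sum} (available from \cite{Ha_sumsym}) combine with induction to give the bound; (iv) if $x_{i_q}\notin A$, then $G\setminus A=H(\kk(x_{i_1}),\ldots,\kk(x_{i_q})\setminus A)$ is of the same shape as $G$ with $\kp(G\setminus A)<\kp(G)$, and the induction hypothesis applies directly.

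The main obstacle I foresee is justifying the ordinary-power colon identity $(I(G\setminus A)^r:x_B)=I(G\setminus A)^{r-|B|+1}$ cleanly, because the symbolic version \cite[Lemma 2]{Seyed_us} is proved via the primary-decomposition description of symbolic powers and does not transfer verbatim; a direct combinatorial argument on edge factorizations at the simplicial clique $B$ is required. A secondary technical point is sub-case (iii), where one must check that each component in the disjoint-union decomposition of $G\setminus A$ already satisfies the bound $\reg(S/I(\cdot)^s)\leq 2s+\reg(S/I(\cdot))-2$ for every $1\leq s\leq r-|B|+1$ — true by the chordal and unicyclic results and by the induction on $\kp$ — and then apply the ordinary-power counterpart of \cite[Theorem 4.6]{Ha_sumsym} to recombine the bounds additively.
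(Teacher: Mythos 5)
Your argument hinges on the ``ordinary-power simplicial-clique colon identity'' $(I(G\setminus A)^r:x_B)=I(G\setminus A)^{r-|B|+1}$, and that identity is false whenever $|B|\geq 3$ --- which is exactly the case you cannot avoid, since your chosen $x_1$ is a simplicial vertex of degree $\geq 2$, so $B=N_G[x_1]$ has at least three elements. The containment $I^{r-|B|+1}\subseteq (I^r:x_B)$ already fails for degree reasons: a minimal generator $f$ of $I^{r-|B|+1}$ has degree $2(r-|B|+1)$, so $fx_B$ has degree $2r-|B|+2<2r$, while $I^r$ is generated in degree $2r$. Concretely, take $G$ a triangle on $\{x_1,x_2,x_3\}$, $B=\{x_1,x_2,x_3\}$, $r=2$: then $(I(G)^2:x_1x_2x_3)=(x_1,x_2,x_3)$, not $I^0=S$ as your formula predicts; and for $r=3$ one gets $x_1x_2\notin (I^3:x_1x_2x_3)$ even though $x_1x_2\in I^{3-3+1}=I$. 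The symbolic version \cite[Lemma 2]{Seyed_us} works precisely because $I^{(r)}$ contains the extra low-degree odd-clique generators (e.g.\ $x_1x_2x_3\in I(C_3)^{(2)}$) that ordinary powers lack; this is not a presentational issue that ``a direct combinatorial argument on edge factorizations'' can repair, but the structural reason why the symbolic-power induction does not transfer to ordinary powers. Your edge-factorization sketch at best addresses the containment $(I^r:x_B)\subseteq I^{r-|B|+1}$, which is the direction you do not need for an upper bound on $\reg(S/(I^r:x_B))$.

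The paper's own proof of Proposition \ref{on-unicyclic} takes a completely different route that sidesteps colon ideals of powers: it verifies that the regularity function satisfies conditions (1)--(4) of \cite[Theorem 4.1]{JS18}, the key point being that every induced subgraph $G'$ of $G$ admits a vertex $z$ with $\reg(S/I(G'\setminus N_{G'}[z]))+1\leq \reg(S/I(G'))$ (exhibited via \cite[Lemma 8]{russ} by finding an edge of $G'$ disjoint from $G'\setminus N_{G'}[z]$), after which the general machinery of \cite{JS18} delivers $\reg(S/I(G)^r)\leq 2r+\reg(S/I(G))-2$ for all $r\geq 1$. If you want to salvage an inductive colon-ideal argument for ordinary powers, you would have to work with Banerjee's even-connection description of $(I^{r+1}:e_1\cdots e_r)$ rather than with $x_B$ for a clique $B$; that is essentially what is packaged inside \cite[Theorem 4.1]{JS18}.
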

\begin{proof}
	Let $G'$ be an induced subgraph of $G$. 
	Suppose
	there exists a vertex $x$ in $G'$ such that $\deg_{G'}(x)=1$.
	Since $G' \setminus N_{G'}[y] \coprod \{x,y\}$ is an induced
	subgraph of $G'$, where $y \in N_{G'}(x)$, it follows from \cite[Lemma 8]{russ}
	that $$\reg(S/I(G' \setminus N_{G'}[y]))+1 \leq \reg(S/I(G')).$$
	
	Suppose $\deg_{G'}(x)>1$ for all $x \in V(G')$.
	If $G'$ is a cycle, then one can see that there exists a vertex $z$ in $G'$
	such that $\reg(S/I(G'\setminus N_{G'}[z]))+1 \leq \reg(S/I(G'))$.
	If $G'=H'(\kk(z_{i_1}),\ldots,\kk(z_{i_s}))$, where $H'$ is an induced 
	subgraph of $H$ and 
	$\{z_{i_1},\ldots,z_{i_s}\} \subseteq T$, then
	$G' \setminus N_{G'}[z_{i_s}] \coprod \{z,z_{i_s}\}$ is an induced subgraph of $G'$, where
	$z \in V(\kk(z_{i_s}))$ and $z \neq z_{i_s}$. By \cite[Lemma 8]{russ},
	$$\reg(S/I(G' \setminus N_{G'}[z_{i_s}]))+1 \leq \reg(S/I(G')).$$
	
	Then it is easy to see that $\reg(-)$ satisfies (1)-(4) of \cite[Theorem 4.1]{JS18}.
	Hence, for all $ r \geq 1$, $\reg(S/I(G)^r) \leq 2r+\reg(S/I(G))-2.$
\end{proof}
We prove the same result for symbolic power as well.
\begin{pro}\label{odd-unicyclic}
Let $H$ be a unicyclic graph and  $G=H_T$ for some $T \subseteq V(H)$. Then \[\reg(S/I(G)^{(r)}) \leq 2r+\reg(S/I(G))-2, \text{ for all } r \geq 1.\]
\end{pro}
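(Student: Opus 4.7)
The plan is to mimic the inductive structure used in the proof of Theorem \ref{bipartite}, replacing the bipartite base case by Fakhari's result for unicyclic graphs \cite{fak-uni}. Write $G = H(\kk(x_{i_1}), \ldots, \kk(x_{i_q}))$ and induct on $r + \kp(G)$. The case $r = 1$ is trivial, and when $\kp(G) = 0$ every attached $\kk(x_{i_j})$ is a disjoint union of whiskers, so $G$ is itself unicyclic and \cite{fak-uni} gives $\reg(S/I(G)^{(r)}) = 2r + \reg(S/I(G)) - 2$.

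For the inductive step ($r \geq 2$, $\kp(G) \geq 3$), I would choose a simplicial vertex $x_1$ of degree $\geq 2$ inside a star-complete component $\kk(x_{i_q})$, write $N_G[x_1] = \{x_1, \ldots, x_\ell = x_{i_q}\}$, and apply Lemma \ref{tech-lemma}. The term $\reg(S/I(G \setminus x_1)^{(r)})$ is handled by the induction on $\kp$ together with Lemma \ref{Inequalities}, since $\kp(G \setminus x_1) < \kp(G)$. For each pair $(A,B)$ with $A \cup B = N_G[x_1]$, $A \cap B = \emptyset$, $x_1 \in B$, the vertex $x_1$ remains simplicial in $G \setminus A$ with $N_{G\setminus A}[x_1] = B$, so \cite[Lemma 2]{Seyed_us} gives $(I(G\setminus A)^{(r)} : x_B) = I(G\setminus A)^{(r-|B|+1)}$, and the case $|B| \geq r+1$ contributes $-\infty$.

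For $|B| = 1$ one has $A = N_G(x_1)$, $\kp(G \setminus A) < \kp(G)$, and induction gives $\reg(S/I(G\setminus A)^{(r)}) \leq 2r + \reg(S/I(G\setminus A)) - 2$; since $(G \setminus A) \coprod \{x_1,x_2\}$ (with $x_2 \in N_G(x_1)$) is an induced subgraph of $G$, \cite[Lemma 8]{russ} yields $\reg(S/I(G\setminus A)) + 1 \leq \reg(S/I(G))$, closing this case. For $2 \leq |B| \leq r$, split on whether $x_\ell \in A$. If $x_\ell \notin A$, then $G \setminus A = H(\kk(x_{i_1}), \ldots, \kk(x_{i_q}) \setminus A)$ with strictly smaller $\kp$, and the induction bound together with the arithmetic $2(r - |B| + 1) + (\reg(S/I(G\setminus A)) - 2) + |B| \leq 2r + \reg(S/I(G)) - 2$ (using Lemma \ref{Inequalities}) finishes. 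If $x_\ell \in A$, the component $\kk(x_{i_q})$ shatters, and $G \setminus A = G_1 \coprod G'$ where $G_1 = (H \setminus x_\ell)(\kk(x_{i_1}), \ldots, \kk(x_{i_{q-1}}))$ and $G'$ is a disjoint union of cliques; for $G_1$ either $H \setminus x_\ell$ is still unicyclic (apply induction) or, if $x_\ell$ lies on the cycle of $H$, it is a forest so Theorem \ref{bipartite} applies; for $G'$ use \cite[Theorem 3.3]{Seyed-chordal}. Proposition \ref{reg-sum} then gives the symbolic-power bound for $G \setminus A$, and \cite[Lemma 8]{russ} converts $\reg(S/I(G_1)) + \reg(S/I(G')) = \reg(S/I(G\setminus A))$; the same arithmetic as before closes this subcase.

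The only real obstacle is the bookkeeping in the subcase $x_\ell \in A$: one must recognize that $H \setminus x_\ell$ may cease to be unicyclic, forcing a switch from the inductive hypothesis to Theorem \ref{bipartite} on the $G_1$ factor, and that the chordal factor $G'$ must be treated separately via \cite{Seyed-chordal} before recombining via Proposition \ref{reg-sum}. Everything else is a direct transcription of the arithmetic from the proof of Theorem \ref{bipartite}, so the entire argument runs in a single induction on $r + \kp(G)$.
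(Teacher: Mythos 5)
Your proposal is correct and follows essentially the same route as the paper: induction on $r+\kp(G)$, the base case via Fakhari's unicyclic result, Lemma \ref{tech-lemma} with \cite[Lemma 2]{Seyed_us} for the colon ideals, and the split on $x_\ell\in A$ versus $x_\ell\notin A$ with Proposition \ref{reg-sum} recombining the components. The only (immaterial) divergence is in the subcase where $H\setminus x_\ell$ is a forest: you invoke Theorem \ref{bipartite} on $G_1$ and then recombine, whereas the paper observes that all of $G\setminus A$ is then chordal and applies \cite[Theorem 3.3]{Seyed-chordal} directly.
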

\begin{proof}
Let $T=\{ x_{i_1},\ldots,x_{i_q}\}$ and $G=H(\kk(x_{i_1}), \ldots, \kk(x_{i_q}))$.	
 We induct on $r+\kp(G)$.
 If $r=1$, then there is nothing to prove.  
 If $\kp(G)=0$, then $G$ is a unicyclic graph. Hence, the assertion follows from \cite[Theorem 3.9]{fak-uni}, \cite[Theorem 5.9]{SVV} and \cite[Theorem 5.4]{ABS19}.
 We consider that $r \geq 2$ and $\kp(G) \geq 3$. Clearly, $\kk(x_{i_q})$ is a star complete graph. 
 Let $x_1 \in V(\kk(x_{i_q})) \setminus \{x_{i_q}\}$ be a simplicial vertex such that $\deg_G(x_1) \geq 2$. Set $N_G[x_1]=\{x_1,\ldots,x_l\}$ and $x_l=x_{i_q}$. 
 Note that $G\setminus x_1=H(\kk(x_{i_1}), \ldots,\kk(x_{i_{q-1}}), \kk(x_{i_q})\setminus x_1)$ and $\kappa(G\setminus x_1)<\kappa(G)$. Thus, by induction,
 \[\reg(S/I(G\setminus x_1)^{(r)}) \leq 2r+\reg(S/I(G\setminus x_1))-2 \leq 2r+\reg(S/I(G))-2.\]
 Let $B \subseteq N_G[x_1]$ such that $x_1 \in B$. Set $A=N_G[x_1]\setminus B$. 
 Observe that $x_1$ is a simplicial vertex of $G\setminus A$ and $N_{G\setminus A}[x_1]=B$. 
 By \cite[Lemma 2]{Seyed_us}, $(I(G\setminus A)^{(r)}:x_B)=I(G\setminus A)^{(r-|B|+1)}$. 
 If $|B| \geq r+1$, then $(I(G\setminus A)^{(r)}:x_B) = S$. So, by Remark \ref{main-rmk}, \[\reg(S/(I(G\setminus A)^{(r)}:x_B))+|B|=-\infty < 2r+\reg(S/I(G))-2.\]
 Now, if $ |B|=1$, then $A=N_G(x_1)$ and $B=\{x_1\}.$ Therefore, 
 $(I(G\setminus A)^{(r)}:x_1)=I(G\setminus A)^{(r)}$. Thus,  
 $$\reg(S/I(G\setminus A)^{(r)}:x_1)+1 \leq 2r+\reg(S/I(G\setminus A))-2+1
 \leq 2r+ \reg(S/I(G\setminus A))-1.$$
 Since $\deg_G(x_1)\geq 2$, we have $|N_G[x_1]|=l \geq 3$.  
 Then $\reg(S/I(G\setminus A))+1 \leq \reg(S/I(G \setminus x_l)) \leq 
 \reg(S/I(G))$ which implies that 
 $\reg(S/I(G\setminus A)) \leq \reg(S/I(G))-1$. Therefore \[\reg(S/I(G\setminus A)^{(r)})+ 1 \leq 2r+\reg(S/I(G))-2.\]
Thus, we assume that $2 \leq |B| \leq r$.  
  We have following cases: 
 \vskip 1mm \noindent
 \textbf{Case a.} If $x_l \notin A$, then $G\setminus A=H(\kk(x_{i_1}), \ldots,\kk(x_{i_{q-1}}), \kk(x_{i_q})\setminus A)$.
 The induction argument yields that \begin{align*}\reg(S/(I(G\setminus A)^{(r)}:x_B))+|B|& 
 \leq 2(r-|B|+1)+\reg(S/I(G\setminus A))-2+|B|\\& \leq 2r+ \reg(S/I(G\setminus A))-|B|
 \leq 2r+\reg(S/I(G)) -2,
 \end{align*}
 where the last inequality follows from Lemma \ref{Inequalities}.
 \vskip 1mm \noindent
 \textbf{Case b.}
Now, if $x_l \in A$, then let $G_1,\ldots, G_k$ be connected components of 
$G\setminus A$. Without loss of generality assume that 
$G_1=(H\setminus x_l)(\kk(x_{i_1}),\ldots,\kk(x_{i_{q-1}}))$. Therefore,  
$G_2,\ldots,G_k$ are cliques. If $H \setminus x_l$ is a forest, then $G_1$ is a chordal 
graph and hence $G\setminus A$ is a chordal graph. 
It follows from 
\cite[Theorem 3.3]{Seyed-chordal} that for all $s \geq 1$, $\reg(S/I(G\setminus A)^{(s)}) 
= 2s+\reg(S/I(G\setminus A))-2$. If $H \setminus x_l$ is a unicyclic graph, then  
$G\setminus A$ is disjoint union of $G_1$ and a chordal graph 
$G'=G_2 \coprod \cdots\coprod G_k$. Since, $\kappa(G_1)<\kappa(G)$, by induction for all $1 \leq s \leq r$, $\reg(S/I(G_1)^{(s)}) \leq 2s+\reg(S/I(G_1))-2$. By \cite[Theorem 3.3]{Seyed-chordal}, $\reg(S/I(G')^{(s)}) =2s+\nu(G')-2=2s+\reg(S/I(G'))-2$ for all $s \geq 1$. Notice that $I(G\setminus A)=I(G_1)+I(G')$. 
By Proposition \ref{reg-sum}, for every $1 \leq s \leq r$,  \[\reg(S/I(G\setminus A)^{(s)})\leq 2s+\reg(S/I(G_1)) +\reg(S/I(G'))-2 = 2s +\reg(S/I(G\setminus A))-2,\] where the last equality follows from \cite[Lemma 8]{russ}.
 Then  
 \begin{align*}\reg(S/(I(G\setminus A)^{(r)}:x_B))+|B|& = 
 \reg(S/I(G\setminus A)^{(r-|B|+1)})+|B| \\& \leq 2(r-|B|+1)+\reg(S/I(G\setminus A))-2+
 |B|\\& \leq 2r+ \reg(S/I(G\setminus A))-|B|\leq 2r+\reg(S/I(G)) -2,
 \end{align*} as $2 \leq |B| \leq r$.
Hence, the assertion follows from Lemma \ref{tech-lemma}.
 \end{proof}
 Now we prove the last main theorem of this section.
 \begin{thm}\label{main-unicyclic}
 Let $H$ be a unicyclic graph and  $G=H_T$ for some $T(\neq \emptyset) \subseteq V(H)$. Then 
 for all $r \geq 1$,
 \[
  \reg(S/I(G)^{(r)})=\reg(S/I(G)^r)=2r+\reg(S/I(G))-2.
 \]
\end{thm}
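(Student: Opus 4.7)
The plan is to combine the upper bounds already proved in this section with a careful matching lower bound coming out of Theorem \ref{tech-uni}. The upper bounds
$\reg(S/I(G)^r)\le 2r+\reg(S/I(G))-2$ and $\reg(S/I(G)^{(r)})\le 2r+\reg(S/I(G))-2$
are precisely Proposition \ref{on-unicyclic} and Proposition \ref{odd-unicyclic}, so the only remaining task is to establish a matching lower bound on both sides.

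For the lower bound I would invoke Theorem \ref{tech-uni}, which pins $\reg(S/I(G))$ to either $\nu(G)$ or $\nu(G)+1$, together with the standard lower bounds $\reg(S/I(G)^r)\ge 2r+\nu(G)-2$ from \eqref{up-lp-reg} and $\reg(S/I(G)^{(r)})\ge 2r+\nu(G)-2$ (cited in the introduction from \cite{GHOJ18}). Whenever $\reg(S/I(G))=\nu(G)$ these already match $2r+\reg(S/I(G))-2$ and there is nothing left to do. The only delicate possibility is $\reg(S/I(G))=\nu(G)+1$, which by Theorem \ref{tech-uni} can occur only when $n\equiv 2\pmod 3$ and $\nu(G\setminus \Gamma(G))=\nu(G)$.

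To exclude this configuration I would use the hypothesis $T\neq\emptyset$ by contradiction. Pick any $x_{i_j}\in T$; then $\kk(x_{i_j})$ contributes at least one edge. For $n=3k+2$ the path $P_{n-3}=C_n\setminus N_{C_n}[y]$ satisfies $\nu(P_{n-3})=k=\nu(C_n)$ for every cycle vertex $y$, so $C_n$ admits an induced matching of maximum size $\nu(C_n)$ that avoids the closed cycle-neighborhood of the cycle-attachment point associated with $x_{i_j}$. Combining this cycle-matching with maximum induced matchings inside each $H_l$ and with a judiciously selected edge from $\kk(x_{i_j})$---an internal clique edge disjoint from $x_{i_j}$ when $\kk(x_{i_j})$ is star-complete with a clique of size $\ge 3$, and the pendant edge otherwise---should produce an induced matching of $G$ of size $\nu(G\setminus\Gamma(G))+1$, contradicting the case hypothesis.

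The main obstacle is the clean combinatorial verification that the augmented edge-set really is an induced matching of $G$. The delicate situations are when $x_{i_j}$ is a non-cycle vertex of $H$ (so the augmenting edge has to be lifted through the tree of $H$ without introducing forbidden chords among the matching endpoints) and when $\kk(x_{i_j})=K_2$ (so the augmenting edge unavoidably involves $x_{i_j}$ itself, tightening the avoidance condition on the remaining matching components). Once this combinatorial reduction is carried out, Theorem \ref{tech-uni} forces $\reg(S/I(G))=\nu(G)$ throughout the hypothesis of the theorem, the two lower bounds meet the upper bounds from the two propositions, and the desired equality $\reg(S/I(G)^{(r)})=\reg(S/I(G)^r)=2r+\reg(S/I(G))-2$ follows.
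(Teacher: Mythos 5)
Your reduction of the theorem to the two upper-bound propositions plus a matching lower bound is the right frame, and your treatment of the cases $n\equiv 0,1\pmod 3$ and of $n\equiv 2\pmod 3$ with $\nu(G\setminus \Gamma(G))<\nu(G)$ coincides with the paper's. The gap is in the remaining case: the configuration $n\equiv 2\pmod 3$ with $\nu(G\setminus \Gamma(G))=\nu(G)$ is \emph{not} excluded by the hypothesis $T\neq\emptyset$, so the contradiction you aim for cannot be reached. Take $H=C_5$ on $y_1,\dots,y_5$ with the path on $y_1,a,b,c$ (edges $y_1a$, $ab$, $bc$) attached at $y_1$, let $T=\{b\}$ and let $\kk(b)$ be the single edge $\{b,d\}$. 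Then $\Gamma(G)=\{a\}$, and $G\setminus\Gamma(G)$ is the disjoint union of $C_5$, the isolated vertex $y_1$, and the star with edges $bc$, $bd$, so $\nu(G\setminus\Gamma(G))=1+1=2$; but also $\nu(G)=2$, since any induced matching of $G$ uses at most one of $y_1a,ab,bc,bd$ (each pair is incident or joined by an edge) and at most $\nu(C_5)=1$ cycle edge. The extra edge supplied by $\kk(b)$ is itself incident to $b$, so it cannot augment a maximum induced matching that already uses an edge at $b$: this is precisely the "non-cycle attachment point" situation you flagged as delicate, and it is fatal rather than technical. In this configuration one in fact has $\reg(S/I(G))=\nu(G)+1$, so the generic lower bound $2r+\nu(G)-2$ falls short of the target $2r+\reg(S/I(G))-2=2r+\nu(G)-1$ by one, and no amount of matching combinatorics will recover it.

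The paper closes this last case by a different mechanism. It passes to the induced subgraph $G\setminus\Gamma(G)=C_n\coprod(\coprod_{j}H_j)$, a disjoint union of $C_n$ and a chordal graph, and computes $\reg(S/I(G\setminus\Gamma(G))^{(r)})$ and $\reg(S/I(G\setminus\Gamma(G))^{r})$ exactly (they equal $2r+\nu(G)-1$ for $r\geq 2$) using the regularity-of-sums results of \cite{Ha_sumsym}, \cite{nguyen_vu} and \cite{selvi_ha}; it then invokes the fact that the regularity of ordinary and symbolic powers can only drop when passing to induced subgraphs (\cite[Corollary 4.3]{selvi_ha}, \cite[Corollary 4.5]{GHOJ18}) to obtain the sharper lower bound $2r+\nu(G)-1$ for $G$ itself, which meets the upper bounds of Propositions \ref{on-unicyclic} and \ref{odd-unicyclic}. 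You would need to import this induced-subgraph computation, or some equivalent sharpening of the lower bound, to complete your argument.
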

\begin{proof}
Let $H$ be a unicyclic graph with cycle $C_n:y_1 \cdots y_n$.
Suppose $n \equiv \{0,1\}(mod~3)$. 
By Proposition \ref{odd-unicyclic}, 
Theorem \ref{tech-uni}(1), \cite[Theorem 4.6]{GHOJ18}, for all $r \geq 1$,
\[
 \reg(S/I(G)^{(r)})=2r+\nu(G)-2=2r+\reg(S/I(G))-2.
\]
It follows from Proposition \ref{on-unicyclic}, Theorem \ref{tech-uni}(1)
and \cite[Theorem 4.5]{selvi_ha} that for all $r \geq 1$,
\begin{align*}
 \reg(S/I(G)^{r})=2r+\nu(G)-2=2r+\reg(S/I(G))-2.
\end{align*}

We assume that $n \equiv 2(mod~3)$.
Suppose $\nu(G \setminus \Gamma(G))<\nu(G)$.
By Proposition \ref{odd-unicyclic}, 
Theorem \ref{tech-uni}(2), \cite[Theorem 4.6]{GHOJ18},
Proposition \ref{on-unicyclic}
and \cite[Theorem 4.5]{selvi_ha}, for all $r \geq 1$,
\[
 \reg(S/I(G)^{(r)})=\reg(S/I(G)^r)=2r+\nu(G)-2=2r+\reg(S/I(G))-2.
\]
Suppose $\nu(G \setminus \Gamma(G))=\nu(G)$.
Following the notation as in \ref{nota-uni},
 $G \setminus \Gamma(G)=C_n \coprod (\coprod_{j=1}^m H_j)$.
Set $I_1=I(C_n)$ and $I_2=I(\coprod_{j=1}^m H_j)$. Since $\coprod_{j=1}^m H_j$ is a chordal graph, by \cite[Theorem 3.3]{Seyed-chordal}, $\reg(S/I_2^{(r)})=\reg(S/I^r_2)=2r+\nu(\coprod_{j=1}^m H_j)-2$ for all
$r \geq 1$.
 By \cite[Corollary 5.4]{GHOJ18}, for all $r \geq 2$,
$\reg(S/I_1^{(r)})=2r+\nu(C_n)-2$. Therefore, by \cite[Theorem 5.11]{Ha_sumsym},
\[
 \reg(S/I(G\setminus \Gamma(G))^{(r)})=\reg(S/(I_1+I_2)^{(r)})=2r+\nu(G\setminus \Gamma(G))-1=2r+\nu(G)-1.
\]
It follows from \cite[Theorem 4.7, Theorem 5.2]{selvi_ha} and
\cite[Theorem 5.7]{nguyen_vu} that for all $r \geq 3$,
\begin{align*}
	\reg(S/I(G\setminus \Gamma(G))^r)
	&=2r+\nu(G \setminus \Gamma(G))-1=2r+\nu(G)-1.
\end{align*}
If $r=2$, then by \cite[Proposition 2.7 (ii)]{HTT}, 
 $\reg(S/I(G\setminus \Gamma(G))^2)
 =\nu(G \setminus \Gamma(G))+3=\nu(G)+3.$
Hence, by \cite[Corollary 4.3]{selvi_ha}, Proposition \ref{on-unicyclic} and 
\cite[Corollary 4.5]{GHOJ18}, Proposition \ref{odd-unicyclic}, 
  \[\reg(S/I(G)^{(r)})=\reg(S/I(G)^r)=2r+\reg(S/I(G))-2 \text{ for all }r \geq 1.\]
\end{proof}

\vskip 2mm
\noindent
\textbf{Acknowledgement:} We would like to thank A. V. Jayanthan for going through 
the manuscript and making some valuable suggestions. We also would like to thank Rajiv Kumar 
for valuable discussions.


\begin{thebibliography}{10}

\bibitem{ABS19}
A.~Alilooee, S.~K. Beyarslan, and S.~Selvaraja.
\newblock Regularity of powers of edge ideals of unicyclic graphs.
\newblock {\em Rocky Mountain J. Math.}, 49(3):699--728, 2019.

\bibitem{BBH17}
A.~Banerjee, S.~K. Beyarslan, and H.~Huy~T\`ai.
\newblock Regularity of edge ideals and their powers.
\newblock In {\em Advances in algebra}, volume 277 of {\em Springer Proc. Math.
  Stat.}, pages 17--52. Springer, Cham, 2019.

\bibitem{AN19}
A.~{Banerjee} and E.~{Nevo}.
\newblock {Regularity of Edge Ideals Via Suspension}.
\newblock {\em arXiv e-prints}, page arXiv:1908.03115, Aug 2019.

\bibitem{selvi_ha}
S.~Beyarslan, H.~T. H{\`a}, and T.~N. Trung.
\newblock Regularity of powers of forests and cycles.
\newblock {\em J. Algebraic Combin.}, 42(4):1077--1095, 2015.

\bibitem{BFH15}
J.~Biermann, C.~A. Francisco, H.~T. H\`a, and A.~Van~Tuyl.
\newblock Partial coloring, vertex decomposability, and sequentially
  {C}ohen-{M}acaulay simplicial complexes.
\newblock {\em J. Commut. Algebra}, 7(3):337--352, 2015.

\bibitem{BC2}
T.~B{\i}y{\i}ko{\u{g}}lu and Y.~Civan.
\newblock {Bounding Castelnuovo-Mumford regularity of graphs via Lozin's
  transformation}.
\newblock {\em ArXiv e-prints}, Feb. 2013.

\bibitem{busch_dragan_sritharan}
A.~H. Busch, F.~F. Dragan, and R.~Sritharan.
\newblock New min-max theorems for weakly chordal and dually chordal graphs.
\newblock In {\em Combinatorial optimization and applications. {P}art {II}},
  volume 6509 of {\em Lecture Notes in Comput. Sci.}, pages 207--218. Springer,
  Berlin, 2010.

\bibitem{CamWalker}
K.~Cameron and T.~Walker.
\newblock The graphs with maximum induced matching and maximum matching the
  same size.
\newblock {\em Discrete Math.}, 299(1-3):49--55, 2005.

\bibitem{CTV93}
M.~V. Catalisano, N.~V. Trung, and G.~Valla.
\newblock A sharp bound for the regularity index of fat points in general
  position.
\newblock {\em Proc. Amer. Math. Soc.}, 118(3):717--724, 1993.

\bibitem{CHT}
S.~D. Cutkosky, J.~Herzog, and N.~V. Trung.
\newblock Asymptotic behaviour of the {C}astelnuovo-{M}umford regularity.
\newblock {\em Compositio Math.}, 118(3):243--261, 1999.

\bibitem{DDAGHN}
H.~Dao, A.~De~Stefani, E.~Grifo, C.~Huneke, and L.~N\'{u}\~{n}ez Betancourt.
\newblock Symbolic powers of ideals.
\newblock In {\em Singularities and foliations. geometry, topology and
  applications}, volume 222 of {\em Springer Proc. Math. Stat.}, pages
  387--432. Springer, Cham, 2018.

\bibitem{DochEng09}
A.~Dochtermann and A.~Engstr\"om.
\newblock Algebraic properties of edge ideals via combinatorial topology.
\newblock {\em Electron. J. Combin.}, 16(2, Special volume in honor of Anders
  Bj\"orner):Research Paper 2, 24, 2009.

\bibitem{DHNT19}
L.~X. Dung, T.~T. Hien, H.~D. Nguyen, and T.~N. Trung.
\newblock Regularity and {K}oszul property of symbolic powers of monomial
  ideals.
\newblock {\em Math. Z.}, 298(3-4):1487--1522, 2021.

\bibitem{FH}
C.~A. Francisco and H.~T. H{\`a}.
\newblock Whiskers and sequentially {C}ohen-{M}acaulay graphs.
\newblock {\em J. Combin. Theory Ser. A}, 115(2):304--316, 2008.

\bibitem{GHOJ18}
Y.~Gu, H.~T. H\`a, J.~L. O'Rourke, and J.~W. Skelton.
\newblock Symbolic powers of edge ideals of graphs.
\newblock {\em Comm. Algebra}, 48(9):3743--3760, 2020.

\bibitem{Ha2}
H.~T. H{\`a}.
\newblock Regularity of squarefree monomial ideals.
\newblock In {\em Connections between algebra, combinatorics, and geometry},
  volume~76 of {\em Springer Proc. Math. Stat.}, pages 251--276. Springer, New
  York, 2014.

\bibitem{Ha_sumsym}
H.~T. H\`a, H.~D. Nguyen, N.~V. Trung, and T.~N. Trung.
\newblock Symbolic powers of sums of ideals.
\newblock {\em Math. Z.}, 294(3-4):1499--1520, 2020.

\bibitem{HTT}
H.~T. H{\`a}, N.~V. Trung, and T.~N. Trung.
\newblock Depth and regularity of powers of sums of ideals.
\newblock {\em Math. Z.}, 282(3-4):819--838, 2016.

\bibitem{ha_adam}
H.~T. H{\`a} and A.~Van~Tuyl.
\newblock Monomial ideals, edge ideals of hypergraphs, and their graded {B}etti
  numbers.
\newblock {\em J. Algebraic Combin.}, 27(2):215--245, 2008.

\bibitem{Haj_yessemi}
N.~Hajisharifi and S.~Yassemi.
\newblock Vertex decomposable graph.
\newblock {\em Publ. Inst. Math. (Beograd) (N.S.)}, 99(113):203--209, 2016.

\bibitem{Hibi_cameronwalker}
T.~Hibi, A.~Higashitani, K.~Kimura, and A.~B. O'Keefe.
\newblock Algebraic study on {C}ameron-{W}alker graphs.
\newblock {\em J. Algebra}, 422:257--269, 2015.

\bibitem{sean_thesis}
S.~Jacques.
\newblock {\em Betti numbers of graph ideals}.
\newblock PhD thesis, University of Sheffield, 2004.

\bibitem{JK19}
A.~V. Jayanthan and R.~Kumar.
\newblock Regularity of symbolic powers of edge ideals.
\newblock {\em J. Pure Appl. Algebra}, 224(7):106306, 2020.

\bibitem{jayanthan}
A.~V. Jayanthan, N.~Narayanan, and S.~Selvaraja.
\newblock Regularity of powers of bipartite graphs.
\newblock {\em J. Algebraic Combin.}, 47(1):17--38, 2018.

\bibitem{JS18}
A.~V. Jayanthan and S.~Selvaraja.
\newblock Upper bounds for the regularity of powers of edge ideals of graphs.
\newblock {\em J. Algebra}, 574:184--205, 2021.

\bibitem{vijay}
V.~Kodiyalam.
\newblock Asymptotic behaviour of {C}astelnuovo-{M}umford regularity.
\newblock {\em Proc. Amer. Math. Soc.}, 128(2):407--411, 2000.

\bibitem{KKS19}
A.~Kumar, R.~Kumar, and R.~Sarkar.
\newblock Certain algebraic invariants of edge ideals of join of graphs.
\newblock {\em J. Algebra Appl.}, 20(6):2150099, 12, 2021.

\bibitem{MT19}
N.~C. Minh and T.~N. Trung.
\newblock Regularity of symbolic powers and arboricity of matroids.
\newblock {\em Forum Math.}, 31(2):465--477, 2019.

\bibitem{AFY15}
A.~Mousivand, S.~A. Seyed~Fakhari, and S.~Yassemi.
\newblock A new construction for {C}ohen-{M}acaulay graphs.
\newblock {\em Comm. Algebra}, 43(12):5104--5112, 2015.

\bibitem{nguyen_vu}
H.~D. Nguyen and T.~Vu.
\newblock Powers of sums and their homological invariants.
\newblock {\em J. Pure Appl. Algebra}, 223(7):3081--3111, 2019.

\bibitem{Seyed-chordal}
S.~A. Seyed~Fakhari.
\newblock Regularity of symbolic powers of edge ideals of chordal graphs.
\newblock {\em Preprint}, 04 2019.

\bibitem{Seyed_us}
S.~A. Seyed~Fakhari.
\newblock An upper bound for the regularity of symbolic powers of edge ideals
  of chordal graphs.
\newblock {\em Electron. J. Combin.}, 26(2):Paper 2.10, 9, 2019.

\bibitem{Seyed-CW}
S.~A. Seyed~Fakhari.
\newblock Regularity of symbolic powers of edge ideals of {C}ameron-{W}alker
  graphs.
\newblock {\em Comm. Algebra}, 48(12):5215--5223, 2020.

\bibitem{fak-uni}
S.~A. Seyed~Fakhari.
\newblock Regularity of symbolic powers of edge ideals of unicyclic graphs.
\newblock {\em J. Algebra}, 541:345--358, 2020.

\bibitem{SVV}
A.~Simis, W.~V. Vasconcelos, and R.~H. Villarreal.
\newblock On the ideal theory of graphs.
\newblock {\em J. Algebra}, 167(2):389--416, 1994.

\bibitem{vill_cohen}
R.~H. Villarreal.
\newblock Cohen-{M}acaulay graphs.
\newblock {\em Manuscripta Math.}, 66(3):277--293, 1990.

\bibitem{Wood2}
R.~Woodroofe.
\newblock Vertex decomposable graphs and obstructions to shellability.
\newblock {\em Proc. Amer. Math. Soc.}, 137(10):3235--3246, 2009.

\bibitem{russ}
R.~Woodroofe.
\newblock Matchings, coverings, and {C}astelnuovo-{M}umford regularity.
\newblock {\em J. Commut. Algebra}, 6(2):287--304, 2014.

\end{thebibliography}

\end{document}